\documentclass[a4paper,10pt]{amsart}
\usepackage[utf8]{inputenc}
\usepackage{amsmath,amsthm,amssymb}
\usepackage{enumerate}
\usepackage{comment}

\usepackage[colorinlistoftodos,prependcaption,textsize=tiny]{todonotes}

\newtheorem{lemma}{Lemma}[section]
\newtheorem{proposition}[lemma]{Proposition}
\newtheorem{theorem}[lemma]{Theorem}
\newtheorem{corollary}[lemma]{Corollary}
\newtheorem{conj}[lemma]{Conjecture}
\newtheorem{question}[lemma]{Question}

\theoremstyle{definition}
\newtheorem{definition}[lemma]{Definition}
\newtheorem{example}[lemma]{Example}
\newtheorem{remark}[lemma]{Remark}

\renewcommand{\Re}{\operatorname{Re}}
\renewcommand{\Im}{\operatorname{Im}}

\def\ffi{\varphi}
\def\dst{\displaystyle}

\DeclareMathOperator{\Id}{Id}

\def\bb{{\mathcal B}}
\def\cc{{\mathcal C}}

\def\ee{{\mathcal E}}
\def\ff{{\mathcal F}}

\def\mm{{\mathcal M}}

\def\C{{\mathbb{C}}}

\def\N{{\mathbb{N}}}

\def\Q{{\mathbb{Q}}}
\def\R{{\mathbb{R}}}

\def\T{{\mathbb{T}}}

\def\Z{{\mathbb{Z}}}

\newcommand{\abs}[1]{{\left|{#1}\right|}}
\newcommand{\scal}[1]{{\left\langle{#1}\right\rangle}}

\usepackage[colorinlistoftodos]{todonotes}


\begin{document}
\title[Uniqueness of phase retrieval from three measurements]{Uniqueness of phase retrieval from three measurements}

\author{Philippe Jaming}
\email{philippe.jaming@math.u-bordeaux.fr}

\author{Martin Rathmair}
\email{martin.rathmair@math.u-bordeaux.fr}
\address{Univ. Bordeaux, IMB, UMR 5251, F-33400 Talence, France. CNRS, IMB, UMR 5251, F-33400 Talence, France.}

\keywords{phase retrieval; wright's conjecture, holomorphic functions}
\subjclass{30D05, 42B10, 94A12}

\begin{abstract}
In this paper we consider the question of finding an as small as possible family of operators
$(T_j)_{j\in J}$ on $L^2(\R)$ that does phase retrieval: every $\ffi$ is uniquely determined 
(up to a constant phase factor)
by the phaseless data $(|T_j\ffi|)_{j\in J}$. This problem arises in various fields of applied sciences
where usually the operators obey further restrictions. 

Of particular interest here are so-called {\em coded diffraction paterns} where the operators are of the form
$T_j\ffi=\ff[m_jf]$, $\ff$ the Fourier transform and $m_j\in L^\infty(\R)$ are ``masks''.
Here we explicitely construct three real-valued masks $m_1,m_2,m_3\in L^\infty(\R)$
so that the associated coded diffraction patterns
do phase retrieval. This implies that the three self-adjoint operators $T_j\ffi=\ff[m_j\ff^{-1}\ffi]$
also do phase retrieval. The proof uses complex analysis.

We then show that some natural analogues of these operators
in the finite dimensional setting do not always lead to the same uniqueness result
due to an undersampling effect.
\end{abstract}

\maketitle

\section{Introduction}
Generally speaking, phase retrieval refers to the problem of recovering a signal from phaseless linear measurements. 
Typical instances of phase retrieval tasks include the question of recovering a function from the magnitude of its Fourier transform or a variant therof.
Such problems arise in various areas of natural sciences ranging
from signal processing to quantum mechanics. This family of problems has recently attracted a lot of attention in the mathematical community, and we refer {\it e.g.} to \cite{BBE,grohsstable20,GKK,JEH,KST,She} for an overview of some recent developments, as well as for references to concrete problems.

\subsection{Problem setting}
Within this article we will predominantly deal with signals $f$ of one real variable, i.e. $f\in L^2(\mathbb{R})$.
The phase retrieval problems we shall consider will be associated to a given family of operators.

\begin{definition}\label{def:uniqueness}
 Let $\mathcal{T}=(T_j)_{j\in J}$ be a family of linear operators on $L^2(\mathbb{R})$, i.e. $T_j:L^2(\mathbb{R})\rightarrow \mathbb{C}^{\Omega_j}$ linear. We say that $\mathcal{T}$ \emph{does phase retrieval} if 
 \[
  \phi,\psi \in L^2(\mathbb{R}):~ |T_j\psi|=|T_j\phi|,~j\in J 
  \quad\Rightarrow\quad
  \exists c\in \mathbb{T}:  ~\text{s.t.}~ \psi = c \phi
 \]
 with $\mathbb{T}$ the set of complex numbers of modulus $1$.
\end{definition}
Clearly, due to the linearity of the operators, $\phi$ and $c\phi$  produce the same phaseless measurements when $|c|=1$.
Thus, the notion of uniqueness introduced in Definition \ref{def:uniqueness} is the best one can hope for.\\
In practice, an arbitrary linear operator $T$ will in general not represent an attainable measurement. Moreover, measurements may be costly resulting in natural restrictions on the number of operators employed.
To put it casually, the objective in this article is to identify families $\mathcal{T}$ which do phase retrieval subject to the two side constraints 
\begin{enumerate}[i)]
 \item the operators represent physically meaningful objects and
 \item use as few operators as possible.
\end{enumerate}
Constraint i) is admittedly very vaguely phrased, and depends strongly on the concrete application one has in mind.
In the subsequent paragraph, Section \ref{subsec:motivation} we provide some possible physical context and concretize the question further.\\
To summarize, formally the task we are confronted with is the following.
\begin{quote}
 Given a set $\mathcal{A}$ of admissible linear operators on $L^2(\mathbb{R})$, find a family of operators $\mathcal{T}\subseteq \mathcal{A}$ which does phase retrieval and which is as small as possible. 
\end{quote}
\subsection{Motivation}\label{subsec:motivation}
Next we briefly discuss two important applications in physics where the problem of lost phase information appears.
Both of these instances naturally fit into the problem formulation outlined above. 
\subsubsection{Diffractive Imaging}
Perhaps the most prominent example of a phase retrieval problem arises in diffraction imaging, where one seeks to determine an unknown object represented by $f\in L^2(\mathbb{R})$ given its so-called diffraction pattern, which is represented by $|\hat{f}|$, the modulus of its Fourier transform. 
We refer to \cite{Goo} for the derivation of this model from physical considerations.
Here and in the remainder we normalize the Fourier transform according to 
\begin{equation*}
 \hat{f}(\xi) = \ff f(\xi) := \int_{\mathbb{R}} f(x)e^{-2\pi i x\xi}\,dx,\quad \xi\in \mathbb{R},
\end{equation*}
for $f\in L^1(\mathbb{R})$, and extend the definition to $L^2(\mathbb{R})$ in the usual way.\\
The mapping $f\mapsto |\hat{f}|$ is far from injective: Clearly, for an arbitrary measurable phase function $\varphi:\mathbb{R}\rightarrow \mathbb{R}$ we get that $f_\varphi:=\ff^{-1}[e^{i\varphi}\hat{f}]$ has the same Fourier modulus as $f$.\\

There are two rather obvious strategies to overcome these issues of non-unique\-ness:

\smallskip

\noindent\textbf{Restrict the signal space:} A popular constraint is to assume the signal under consideration to be compactly supported.
 The problem then amounts to determining a band-limited function from its modulus.
 However, this additional assumption is known not to render the problem unique.
To start with, it is still possible to modulate and conjugate the function (this are called trivial solutions)
and more ambiguous solutions can be constructed by employing what is known as the ``zero-flipping'' operation. 
 For details we refer to the articles of Akutowicz\cite{Ak}, Walther\cite{Wa} and Hofstetter\cite{Ho}. 
 In particular, given a compactly supported $f\in L^2(\mathbb{R})$ there is in general a huge (uncountably infinite!) set of non-equivalent ambiguous solutions, all of which have compact support.
 Corresponding results hold true in the context of wide-banded signals, see \cite{KJP}.

\smallskip

\noindent\textbf{Collect several diffraction patterns:} The idea of this approach is to accumulate more information by acquiring several diffraction patterns making use of so-called masks. 
 In our setup, a mask would then be a function $\gamma\in L^\infty(\mathbb{R})$, which interacts multiplicatively with the unknown signal $f$ before computing its diffraction pattern $|\ff [\gamma\cdot f]|$. Measurements acquired in this manner are also known as \emph{coded-diffraction patterns} ({\it see e.g.} \cite{CLS,GKK}).
 Therefore, in this particular context of diffraction imaging it appears natural to define the set of admissible operators by
 \begin{equation}\label{def:Adi}
  \mathcal{A}_{DI}=\left\{\ff \circ m_\gamma:~ \gamma\in L^\infty(\mathbb{R})\right\},
  \end{equation}
 where $m_\gamma(f):=\gamma\cdot f$ denotes the multiplication operator, and seek for a (small) family of operators $(T_j)_{j\in J}\subseteq \mathcal{A}_{DI}$ with corresponding masks $(\gamma_j)_{j\in J}$ and which does phase retrieval.

 \subsubsection{Quantum Mechanics}
 A second motivation for looking at this kind of problem comes from quantum mechanics, and in particular from a question stemming back from the work of W. Pauli.
 The aim here is to formulate everything in terms of a mathematical language. However, we dedicate the Appendix to bridging the gap between the physics literature and our formulations.\\
 
In a footnote to the {\em Handbuch der Physik} article on the general principle
of wave mechanics \cite{Pa}, W. Pauli asked whether a wave function $\psi\in L^2(\mathbb{R})$ is uniquely determined (up to a constant phase factor) by the pair
$(|\psi|,|\widehat{\psi}|)$, which is sometimes called the {\em Pauli data}. In our terms, the question Pauli posed amounts to 
\begin{center}
\emph{
 Does $(\Id, \ff)$ do phase retrieval?}
\end{center}

The first counter-example seems to be due to Bargmann who considered
the following simple example based on complex Gaussians:

\begin{example}
Let $\psi_\pm(x)=e^{-(1\pm i)\pi x^2}$ so that $\widehat{\psi_\pm}(\xi)=
2^{-1/4}e^{\mp i\pi/8}e^{-(1\mp i)\pi \xi^2/2}$. It follows
that $\psi_+=\overline{\psi_-}$ and $\widehat{\psi_+}=\overline{\widehat{\psi_-}}$
so that $|\psi_+|=|\psi_-|$ and $|\widehat{\psi_+}|=|\widehat{\psi_-}|$.
\end{example}

However, we may introduce slightly less restrictive notions of equivalence, such as the following two:

-- two states $\ffi,\psi$ are equivalent up to a constant phase factor and {\em conjugation}, 
if there exists $\lambda\in\C$ with $|\lambda|=1$ such that either $\ffi=\lambda\psi$
or $\ffi=\lambda\overline{\psi}$

-- two states $\ffi,\psi$ are equivalent up to a constant phase factor and {\em conjugate reflection}, 
if there exists $\lambda\in\C$ with $|\lambda|=1$ such that either $\ffi=\lambda\psi$
or $\ffi=\lambda\psi^*$ where $\psi^*(x)=\overline{\psi(-x)}$.

In Bargmann's example, the states $\psi_\pm$ are still equivalent up to a constant phase factor
and conjugation (or conjugate reflection). One may then ask for a class of states $\cc\subset L^2(\R)$ such that
$\ffi,\psi\in\cc$ with same Pauli data are necessarily equivalent (eventually up to a conjugation and reflection as well). This problem has attracted some attention over the years 
and more evolved counter-examples have been found, (see \cite{Co,CH,Is,Ja1,janss2,Vo} to name a few)
some which are still equivalent up to a constant phase factor and conjugation or conjugate reflection, 
and some which are not equivalent in this less restrictive form.\\

From a quantum mechanical perspective it is natural to consider unitary operators as admissible. We once more refer to the Appendix where we elaborate on why this is a natural choice. 
To be more precise, we define 
\begin{equation}\label{def:Aqm}
 \mathcal{A}_{QM}= \left\{ T:L^2(\mathbb{R})\rightarrow L^2(\mathbb{R},\mu) ~\text{unitary}, ~\text{with}~ \mu ~\text{a Borel measure}\right\}.
\end{equation}
Obviously, $\Id$ and $\ff$ belong to this class of operators. 
A natural extension to Pauli's question is the following conjecture attributed to R. Wright (based on a degree of freedom argument) which is mentioned
in \cite{Vo}:

\begin{conj}[R. Wright]\label{conj:wright}
There exists a unitary operator $T\in \mathcal{A}_{QM}$ such that 
$(\Id, \ff, T)$ does phase retrieval.
\end{conj}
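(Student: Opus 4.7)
The natural approach is to take $T$ in the concrete form $T=\ff\circ M_\gamma$, where $M_\gamma$ denotes multiplication by a function $\gamma\in L^\infty(\R)$ with $|\gamma(x)|=1$ almost everywhere. As a composition of two unitaries, such a $T$ belongs to $\mathcal{A}_{QM}$, and the third phaseless datum reads $|Tf|(y)=|\widehat{\gamma f}(y)|$, i.e.\ a coded diffraction pattern of $f$ with unimodular mask $\gamma$. Wright's conjecture then reduces to exhibiting a single $\gamma$ such that the triple $(|f|,|\hat f|,|\widehat{\gamma f}|)$ determines $f$ up to a unimodular constant.

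The plan is to analyse the residual ambiguities via complex analysis, after reducing to a class of $f$ for which the relevant Fourier transforms are entire. Restrict first to compactly supported $f\in L^2(\R)$; by Paley--Wiener, both $\hat f$ and $\widehat{\gamma f}$ are then entire of exponential type. Any competitor $g$ with $|g|=|f|$ must be of the form $g=uf$ on $\operatorname{supp}(f)$ for a measurable unimodular $u$; the identity $|\hat g|=|\hat f|$ on $\R$ then forces, via the Hadamard factorisation and the classical zero-flipping analysis of Akutowicz, Walther and Hofstetter, $\hat g$ to arise from $\hat f$ by complex-conjugating some subset of its zeros. The third datum $|\widehat{\gamma g}|=|\widehat{\gamma f}|$ imposes an analogous zero-flipping instruction on the entire function $\widehat{\gamma f}$.

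The heart of the argument is then a rigidity step: the mask $\gamma$ must be chosen so that the two zero-flipping instructions (for $\hat f$ and for $\widehat{\gamma f}$) cannot both be satisfied non-trivially by the same pair $(f,g)$, except via the unavoidable global operations (trivial phase, conjugation, reflection) that are already excluded by $|f|$. Natural candidates include a chirp $\gamma(x)=e^{i\pi\alpha x^{2}}$, which renders $T$ a fractional Fourier transform, or a more carefully tailored analytic unimodular phase; in either case one must show that flipping any non-trivial subset of zeros of $\hat f$ produces a perturbation of $\widehat{\gamma f}$ that is incompatible with every admissible zero-flipping of that function. This rigidity is where the main difficulty lies and will presumably rest on quantitative estimates on the distribution of zeros of entire functions of exponential type, combined with a combinatorial independence argument separating the two zero sets. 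A final density or Bargmann-transform step would then pass from compactly supported $f$ to arbitrary $f\in L^2(\R)$.
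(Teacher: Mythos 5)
This statement is a \emph{conjecture}, not a theorem of the paper: the authors explicitly state that ``both Wright's conjecture as well as the following relaxed version remain open up to this point in time.'' The paper contains no proof of it, and what it actually proves is a different (and weaker, in the sense of not answering Wright) statement: that three coded diffraction patterns with the non-unimodular masks $\gamma_1=e^{-\pi t^2}$, $\gamma_2=2\pi t\gamma_1$, $\gamma_3=(1-2\pi t)\gamma_1$ do phase retrieval. Those operators are not unitary and the triple does not contain $\Id$ and $\ff$, so Theorem \ref{th:intro} does not settle Conjecture \ref{conj:wright}. Your proposal therefore cannot be ``compared against the paper's proof''; it has to stand on its own, and it does not.

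Concretely, there are two fatal gaps. First, the step you yourself flag as ``where the main difficulty lies'' --- the rigidity argument showing that no non-trivial zero-flipping of $\hat f$ is compatible with an admissible zero-flipping of $\widehat{\gamma f}$ --- is the entire content of the conjecture and is not carried out; moreover the ambiguity left by $|f|$ alone is not a zero-flipping ambiguity but the full family $g=uf$ with $u$ an arbitrary measurable unimodular function on $\operatorname{supp}f$, and intersecting that huge set with the zero-flip solutions of $|\hat g|=|\hat f|$ is precisely the unsolved core of the Pauli problem (the known counterexamples of Corbett--Hurst, Janssen, Vogt, etc.\ live exactly in this intersection). Second, the reduction to compactly supported $f$ with a concluding ``density or Bargmann-transform step'' cannot work: for a general $f\in L^2(\R)$ neither $\hat f$ nor $\widehat{\gamma f}$ extends holomorphically anywhere, so the Hadamard/zero-flipping machinery is unavailable, and injectivity of a nonlinear phaseless measurement map on a dense subspace does not imply injectivity on all of $L^2(\R)$. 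Note how the paper avoids both problems in its (different) theorem: it multiplies by a Gaussian mask so that $F=\ff[\gamma_1\ffi]$ is entire for \emph{every} $\ffi\in L^2(\R)$, and it chooses the other masks so that $|F|$, $|F'|$, $|F+iF'|$ are measured, which determines $F'\bar F$ and hence $F$ up to a unimodular constant by the elementary Lemma \ref{lem:uniqueness} --- no zero-flipping analysis is needed. That route, however, is only available because the masks are allowed to decay, which is exactly what unitarity of $\ff\circ M_\gamma$ forbids.
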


To the best of our knowledge, both Wright's conjecture as well as the following relaxed version remain open up to this point in time.
\begin{conj}\label{conj:3unitaries}
 There exist $T_1,T_2,T_3 \in \mathcal{A}_{QM}$ such that 
 $(T_1,T_2,T_3)$ does phase retrieval.
\end{conj}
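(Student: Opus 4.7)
The plan is to reduce Conjecture~\ref{conj:3unitaries} to a coded-diffraction phase-retrieval problem with \emph{unimodular} masks, and then to attack the reduced problem with complex-analytic methods in the spirit of the paper's main result. The reduction rests on a simple observation: if $m\in L^\infty(\R)$ satisfies $|m(x)|=1$ almost everywhere, then the multiplication operator $M_m:f\mapsto mf$ is unitary on $L^2(\R)$, so the composition $T:=\ff\circ M_m$ is a unitary operator in $\mathcal{A}_{QM}$. Hence it is enough to exhibit three unimodular functions $m_1,m_2,m_3\in L^\infty(\R)$ such that the coded-diffraction family $(|\ff[m_j f]|)_{j=1,2,3}$ determines $f$ up to a constant phase.

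For the construction I would take $m_1\equiv 1$, so that $T_1=\ff$ and one measurement is the classical diffraction pattern $|\widehat f|$. For $m_2$ and $m_3$ the natural unimodular candidates are chirps $m_j(x)=e^{i\pi a_j x^2}$: the associated operators $\ff\circ M_{m_j}$ are then (up to a change of variable) fractional Fourier transforms, so that $|T_2 f|$ and $|T_3 f|$ may be viewed as moduli of $f$ along two rotated axes in the time-frequency plane, distinct from the horizontal and vertical ones. Concretely, the cleanest candidate for the triple is $(T_1,T_2,T_3)=(\ff,F_{\alpha},F_{\beta})$ for two distinct angles $\alpha,\beta\in(0,\pi/2)$, where $F_{\theta}$ denotes the fractional Fourier transform of angle $\theta$.

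To prove phase retrieval for such a triple I would first restrict to compactly supported $f$. Each $\ff[m_j f]$ then extends to an entire function $F_j$ of exponential type, and if a second signal $g$ produces the same three moduli with analogous extensions $G_j$, the identities $|F_j|=|G_j|$ on $\R$ combined with Hadamard factorization force $G_j$ to arise from $F_j$ by flipping certain zeros across the real axis, twisted by the explicit chirp factors $m_j$. The three simultaneous constraints ($j=1,2,3$) then encode a single underlying ambiguity in three different ways, and the goal is to show that this forces $f=cg$ for some $c\in\T$. A truncation and density argument would finally extend the conclusion to all of $L^2(\R)$.

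The hard part will be the interplay between zero flipping and chirp modulation. In the coded-diffraction setting of the paper, the \emph{zeros} of the real-valued masks themselves supply additional discrete information that can be used to pin down the ambiguity; unimodular masks have no zeros, so this leverage is unavailable and one has to exploit instead the nontrivial way in which chirp multiplication permutes the zero sets of entire extensions between the upper and lower half planes. Since the Akutowicz zero-flipping construction already produces an uncountable family of ambiguities for the Pauli data $(\Id,\ff)$ alone, a single additional fractional-Fourier measurement must eliminate \emph{all} of them simultaneously, and establishing this rigidity --- in particular, ruling out even one compactly supported counterexample --- is where I expect the main analytic difficulty to lie.
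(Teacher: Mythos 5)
This statement is a \emph{conjecture}, not a theorem: the paper explicitly states that both Wright's conjecture and this relaxed version ``remain open up to this point in time,'' and it offers no proof. What the paper actually proves is a deliberately different result: three coded diffraction patterns with \emph{real-valued, non-unimodular} masks $\gamma_1,\gamma_2,\gamma_3$ (Gaussian times polynomial), which yield three \emph{self-adjoint} operators $\ff\circ m_{\gamma_k}\circ\ff^*$ doing phase retrieval --- precisely because the unitary version could not be settled. So there is no ``paper's own proof'' to match yours against, and your proposal must be judged as an attempt at an open problem.

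As such, it has genuine gaps. First, the decisive step is missing: after reducing to the triple $(\ff,\ff_\alpha,\ff_\beta)$ of fractional Fourier transforms (a correct and natural reduction --- unimodular masks do give unitary operators, and chirp masks do give fractional Fourier transforms up to modulus-preserving factors), you must show that the three simultaneous zero-flipping constraints force $f=cg$. You explicitly defer this (``where I expect the main analytic difficulty to lie''), but this \emph{is} the conjecture; everything before it is routine reformulation. Note that the known positive result \cite[Proposition 4.2]{Ja2} requires the \emph{entire continuum} $(\ff_\alpha)_{\alpha\in[0,2\pi)}$, and the finite-dimensional analogue with three bases is \emph{false} (Morov--Perelomov), so there is no a priori reason to expect three angles to suffice. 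Second, the closing ``truncation and density argument'' to pass from compactly supported $f$ to all of $L^2(\R)$ does not work: phase retrieval is an injectivity statement, and injectivity on a dense subspace says nothing about injectivity on the whole space --- two non-compactly-supported functions could agree in all three moduli without either being approximable in a way that transfers the conclusion. The paper avoids this entirely by choosing masks ($O(e^{-a|x|})$ decay) that make $\ff[\gamma_j f]$ holomorphic near $\R$ for \emph{every} $f\in L^2(\R)$, with no support restriction needed.
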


If the restriction on the number of operators in Wright's conjecture is dropped we can give a positive answer:
For instance,
we may consider the fractional Fourier transform defined as follows:
for $\alpha\in\R\setminus\pi\Z$, let $c_\alpha=\dst\frac{\exp\frac{i}{2}\left(\alpha-\frac{\pi}{2}\right)}{\sqrt{|\sin\alpha|}}$
be a square root of $1-i\cot\alpha$. For $u\in L^1(\R)$ and $\alpha\notin\pi\Z$, define
\begin{eqnarray}
\ff_\alpha u(\xi)&=&c_\alpha e^{-i\pi|\xi|^2\cot\alpha}\int_{\R}
u(t)e^{-i\pi |t|^2\cot\alpha}e^{-2i\pi t \xi/\sin\alpha}\mbox{d}t\notag\\
&=&c_\alpha e^{-i\pi|\xi|^2\cot\alpha}\ff[u(t)e^{-i\pi |t|^2\cot\alpha}](\xi/\sin\alpha).
\label{eq:deffrfo}
\end{eqnarray}
The last expression shows that $\ff_\alpha$ extends to a bounded operator on $L^2(\R)$.
Further $\ff_{\pi/2}=\ff$ the usual Fourier transform and we define $\ff_{2k\pi}=I$
the identity operator and $\ff_{(2k+1)\pi}f(x)=f(-x)$ then $\ff_{\alpha+\beta}=\ff_\alpha\ff_\beta$.
Finally, $\ff_\alpha$ is a unitary operator with $\ff_\alpha^*=\ff_{-\alpha}$.
One of the authors showed that $(\ff_\alpha)_{\alpha\in [0,2\pi)}$ does phase retrieval \cite[Proposition 4.2]{Ja2}. \\
An equivalent formulation is
that if for every time $t\geq 0$, the free Schr\"odinger evolution of $\ffi,\psi$
have same modulus $|e^{it\Delta}\ffi|=|e^{it\Delta}\psi|$ then $\ffi,\psi$ are equivalent up to a constant phase factor (this was conjectured in \cite{Vo}).

Proceeding from Conjecture \ref{conj:3unitaries} one may replace the constraint on the operators to be unitary by assuming them to be self-adjoint, and ask
\begin{quotation}
 \emph{Is there a triple of self-adjoint operators $(T_1,T_2,T_3)$ on $L^2(\mathbb{R})$ which does phase retrieval?}
\end{quotation}
Again, the point is that we want a triple of self-adjoint operators. For instance, Vogt \cite{Vo} stated (without proof) that the set of all rank one orthogonal projections does phase retrieval. An even smaller
set of rank one projections is sufficient. One may for instance take an orthonormal basis $(e_i)_{i\in \N}$
of $L^2(\mathbb{R})$ and then consider the rank one projections on the spaces $\mathrm{Span}(e_k)$, $k\in\N$,
$\mathrm{Span}(e_k+e_\ell)$, and $\mathrm{Span}(e_k+i e_\ell)$ $k\not=\ell\in\N$.
It is then easy to show that the family of associated orthogonal projections does phase retrieval.

On the other hand, shifting the focus towards the minimality of the employed operator family, without requiring
self-adjointness of the operators, the follwing was shown by one of the authors. Take $\gamma=e^{-\pi \cdot ^2}$ the Gaussian and $\alpha \in \mathbb{R} \setminus \pi\mathbb{Q}$, then
the pair $ T_1 \psi= \gamma \ast \psi$ and $T_2 \psi = \gamma \ast \ff_\alpha \psi$ 
does phase retrieval \cite[Proposition 4.1]{Ja2}.
Note that while $T_1$ is self-adjoint, $T_2$ is not. 

\begin{remark}
 Wright's conjecture has also attracted considerable amount of interest in the finite dimensional setting.
 Translating Conjecture \ref{conj:3unitaries} to $\mathbb{C}^d$ amounts to asking whether there exist three orthonormal bases 
 $(e_k^{(1)})_{k=1}^d, (e_k^{(2)})_{k=1}^d$ and $(e_k^{(3)})_{k=1}^d$, such that each and every vector $\psi\in\mathbb{C}^d$ is uniquely determined (up to multiplication by a unimodular constant) by the measurements
 \[
  |\langle e_k^{(j)},\psi\rangle|,\quad k=1,\ldots,d,~j=1,2,3.
 \]
This finite dimensional version has been disproved by  Morov and Perelomov \cite{MP} in the early 90s.
Further, one may relax the constraints and ask for a set of vectors $(e_k)_{k=1}^N$ such that $|\scal{e_k,\psi}|$,
$k=1,\ldots,N$ uniquely determines $\psi$ up to a unimodular constant.
Heinosaari, Mazzarella and Wolf \cite{HMW} proved that the minimal number of vectors is $\geq 3d+\alpha_d$
with $\alpha_d\to+\infty$ when $d\to+\infty$. \\
On the other hand, Mondragon and Voroninski \cite{MV}\footnote{This paper has not appeared yet.
However, a construction somewhat similar to our argument for rank-one projections gives an explicit family of
$5$ unitaries that lead to uniqueness up to a constant phase factor \cite{Go}.}
proved that for four ``generic'' orthonormal bases are enough to
 determine all $\psi$ up to a 
constant phase factor.
\end{remark}

\subsection{Contribution of this paper}
The purpose of this paper is to show that there are three simple and explicit masks such that the resulting coded diffraction patterns uniquely determine all univariate signals.
More precisely we will show the following.

\begin{theorem}\label{th:intro}
Let $\gamma_1=\gamma$ be the standard Gaussian, $\gamma(t) = e^{-\pi t^2}$,
and let $\gamma_2,\gamma_3$ be defined by
 \[
\gamma_2(t):=2\pi t\gamma_1(t), \qquad \gamma_3(t):= (1-2\pi t) \gamma_1(t).
 \]
\begin{enumerate}
\renewcommand{\theenumi}{\roman{enumi}}
\item \label{w1} Let $\ffi,\psi\in L^2(\R)$ be such that $|\ff[\gamma_1\ffi]|=|\ff[\gamma_1\psi]|$
and $|\ff[\gamma_2\ffi]|=|\ff[\gamma_2\psi]|$ then $\ffi$ and $\psi$ are equivalent up
to a constant phase factor and conjugation-reflection: $\psi=c\ffi$ or $\psi=c\ffi^*$
with $|c|=1$;

\item \label{w2} if we further assume that $|\ff[\gamma_3\ffi]|=|\ff[\gamma_3\psi]|$
then $\ffi$ and $\psi$ are equivalent up
to a constant phase factor; in other words, $(\ff\circ m_{\gamma_k})_{k=1}^3$ does phase retrieval.
\end{enumerate} 
\end{theorem}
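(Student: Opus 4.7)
The plan is to recast the phaseless data as identities for entire functions, exploiting that the three masks are built from a Gaussian and its derivative. For $\ffi\in L^2(\R)$, $\gamma_1\ffi$ lies in $L^1$ with Gaussian decay, so
\[
 F_\ffi(z):=\int_{\R}\ffi(t)\,e^{-\pi t^2}\,e^{-2\pi itz}\,\mathrm{d}t
\]
extends $\ff[\gamma_1\ffi]$ to an entire function of order at most $2$. Since $\gamma_2=2\pi t\gamma_1$ and $\gamma_3=\gamma_1-\gamma_2$, differentiation under the integral gives $\ff[\gamma_2\ffi]=iF_\ffi'$ and $\ff[\gamma_3\ffi]=F_\ffi-iF_\ffi'$ on $\R$. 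Write $F=F_\ffi$, $\tilde F=F_\psi$ and $F^*(z):=\overline{F(\bar z)}$; the three measurements amount to knowing $|F|$, $|F'|$ and $|F-iF'|$ on $\R$.

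For (\ref{w2}), the identity $|F-iF'|^2=|F|^2+|F'|^2+2\Im(F'\bar F)$ combined with the three hypotheses yields $\Im(F'\bar F)=\Im(\tilde F'\bar{\tilde F})$ on $\R$; differentiating $|F|^2=|\tilde F|^2$ supplies the matching real part, so $F'\bar F=\tilde F'\bar{\tilde F}$ on $\R$, and by analytic continuation $F'F^*=\tilde F'\tilde F^*$ as entire functions. Together with $FF^*=\tilde F\tilde F^*$, multiplying the first identity by $\tilde F'$ and the second by $\tilde F$ and subtracting lets the factor $F^*$ drop out, leaving the entire identity $F\tilde F'=F'\tilde F$. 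Then on the connected open set $\{\tilde F\ne 0\}$ the quotient $F/\tilde F$ has vanishing derivative, hence equals a constant $c$, necessarily $|c|=1$ by the modulus condition, and Fourier inversion together with $\gamma_1>0$ gives $\ffi=c\psi$.

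For (\ref{w1}) only $|F|=|\tilde F|$ and $|F'|=|\tilde F'|$ are available, so $\Im(F'\bar F)$ is not controlled. Set $a=F'F^*$, $b=F(F^*)'$ and $\tilde a,\tilde b$ analogously. The entire identities $a+b=(FF^*)'=\tilde a+\tilde b$ and $ab=FF^*\cdot F'(F^*)'=\tilde a\tilde b$ imply $(a-b)^2=(\tilde a-\tilde b)^2$, and since entire functions form an integral domain one obtains the dichotomy $a-b=\pm(\tilde a-\tilde b)$. The $+$ branch lands in the situation of (\ref{w2}) and yields $\ffi=c\psi$, while the $-$ branch gives $F'F^*=\tilde F(\tilde F^*)'$, and the same elimination performed with $\tilde F^*$ in place of $\tilde F$ produces $F=c\tilde F^*$, which translates into $\ffi=c\psi^*$.

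The main obstacle in this plan is the passage from $F\tilde F'=F'\tilde F$ to $F=c\tilde F$ as entire functions: a priori $F/\tilde F$ is only meromorphic, and one has to rule out the possibility that $F$ carries zeros not matched by zeros of $\tilde F$ with the same multiplicity. This is handled by the observation that $\{\tilde F\ne 0\}$ is $\C$ minus a discrete set and therefore connected, so local constancy of $F/\tilde F$ propagates to global constancy on this set; any mismatch of zeros would force a blow-up that contradicts the boundedness of this constant. The same argument applies verbatim to the analogue with $\tilde F^*$ arising in (\ref{w1}).
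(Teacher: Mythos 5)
Your proposal is correct. For part (\ref{w2}) it follows essentially the paper's own route: the three moduli determine $F'\bar F$ on $\R$ via $\Re(F'\bar F)=\tfrac12(|F|^2)'$ and the expansion of $|F\mp iF'|^2$, after which $F/\tilde F$ is locally constant and the conclusion follows (the paper phrases this as Lemma~\ref{lem:uniqueness}, working with $\log(G/F)$ on a real interval where $F\neq0$; your version on the connected set $\{\tilde F\neq 0\}\subseteq\C$ is the same argument, and your closing worry about zero mismatches is moot --- constancy of $F/\tilde F$ on that dense connected open set plus continuity already gives $F=c\tilde F$ everywhere). Part (\ref{w1}) is where you genuinely diverge. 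The paper proves the two-measurement statement (Lemma~\ref{lem:der}) by local complex analysis: on a zero-free disk it uses $|\nabla|h||=|h'|$ to get $(\partial_y|G|)^2=(\partial_y|F|)^2$ on the real axis, splits into the two sign cases, and applies a power-series lemma to $\log(G/F)$ resp.\ $\log(G/\bar F)$. You instead set $a=F'F^*$, $b=F(F^*)'$ with $F^*(z)=\overline{F(\bar z)}$ and derive the two \emph{global} entire identities $a+b=(FF^*)'=\tilde a+\tilde b$ and $ab=FF^*\cdot F'(F^*)'=\tilde a\tilde b$, whence $(a-b)^2=(\tilde a-\tilde b)^2$; since entire functions form an integral domain, one of the two factors vanishes identically, and this is precisely the $F=c\tilde F$ versus $F=c\tilde F^*$ dichotomy. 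This buys something concrete: the sign dichotomy is global from the outset, whereas in the paper's route one must still argue that the pointwise sign of $\partial_y|G|/\partial_y|F|$ does not flip along the interval (which the paper leaves to real-analyticity on the zero-free disk). Conversely, the paper's local lemma is phrased so as to apply to functions holomorphic only near $\R$ (needed for the masks $e^{-a|x|}$ of Remark~\ref{rk:other}); your argument survives that generalization as well, since holomorphic functions on a connected strip still form an integral domain and the complement of the discrete zero set remains connected. Two cosmetic repairs: in the elimination step the multipliers should be swapped (multiply $F'F^*=\tilde F'\tilde F^*$ by $\tilde F$ and $FF^*=\tilde F\tilde F^*$ by $\tilde F'$, giving $F^*(F'\tilde F-F\tilde F')=0$), and the degenerate case $F\equiv0$ should be dispatched explicitly before cancelling the factor $F^*$.
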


\begin{remark}
The actual result can be extended in multiple ways. For instance, the function $\gamma_1$ can be replaced by 
$e^{-a|x|}$. We will also provide a second set of 3 operators that does phase retrieval. Finally, we will also show that the result can be extended to $L^2(\R^d)$ where we need $2d+1$ operators.
\end{remark}

\begin{remark}
 Note that the three masks $\gamma_1,\gamma_2,\gamma_3$ are real-valued, and consequently that $A_k=\ff \circ m_{\gamma_k} \circ \ff^\ast$, $k=1,2,3$ define self-adjoint operators on $L^2(\mathbb{R})$.
 It follows directly from Theorem \ref{th:intro} that $(A_1,A_2,A_3)$ does phase retrieval, hence we simultaneously solve the question posed earlier in Section \ref{subsec:motivation}.
\end{remark}

This result could be deduced from a result by McDonald \cite{Mc}: the main result of that paper can be summarized
as the identity and the derivation operator do phase retrieval (up to reflections)
when restricted to band-limited or even to narrow-banded functions. We will however give a more direct proof
and deduce our result from a bit more general facts. 
There are two possible strategies of proof. We could first establish \eqref{w1} and then deduce \eqref{w2}
from it. It turns out that this can be done in a more direct way using a simple lemma about analytic functions
(Lemma \ref{lem:uniqueness}). The proof of \eqref{w1} is a bit more evolved and uses a lemma from the second 
author. Deducing \eqref{w2} from it follows essentially the same lines as the ones used to directly establishing \eqref{w2}.

In a second part of this paper, we will move to the discrete setting. The operators we consider
have natural discrete analogues. More precisely, we will identify $\psi\in\C^d$ with an analytic
trigonometric polynomial $P_\psi$. The measurements we consider are then samples of $|P_\psi|$
and of $|P_\psi^\prime|$ the modulus of the derivative of $P_\psi$.
We will show that this requires $4d-2$ samples to lead to uniqueness (up to a constant phase factor)
and provide an example of non uniqueness with less samples. This is of course coherent
with the fact mentionned above that $3d$ phaseless measurements are not sufficient. However, it allows to
show the role of the sampling rate and explains why $3d$ phaseless measurements may not have been the right analogue
of Wright's conjecture in $\C^d$.

\medskip

The remainder of this paper is organized as follows: the next section is devoted to the continuous setting, followed by a section devoted to the discrete case. We conclude with an appendix to clarify the role of unitaries in
Wright's conjecture, mainly aimed to mathematicians without background on quantum mechanics.

\section{Continuous Level}

\subsection{Three Measurements}
We begin with an auxiliary result
which provides us with a uniqueness statement.
\begin{lemma}\label{lem:uniqueness}
Let $I\subseteq \mathbb{R}$ be an open interval and let $\mathcal{A}(I)$ denote the space of complex-valued analytic functions on $I$.
Then $F\in \mathcal{A}(I)$ is uniquely determined (up to multiplication by a unimodular constant) by $|F|^2$ and $F'\bar{F}$.
\end{lemma}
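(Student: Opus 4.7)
The plan is to show that the ratio $H := G/F$ of two analytic functions with the same modulus-squared and the same $F'\bar F$ data is a unimodular constant.

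First I would dispose of the trivial case: if $F\equiv 0$, then $|G|^2 = |F|^2 \equiv 0$, so $G \equiv 0$ and any unimodular $c$ works. So assume $F \not\equiv 0$, whence the zero set $Z(F)\subseteq I$ is discrete. Next I observe that the hypothesis $|F|^2 = |G|^2$ forces $F$ and $G$ to have \emph{the same zero set with the same orders}: near $x_0 \in Z(F)$ with $F(x)=(x-x_0)^n \tilde F(x)$, $\tilde F(x_0)\neq 0$, comparing $|F|^2$ with $|G|^2$ shows $G$ must also vanish to order exactly $n$ at $x_0$. Consequently, $H := G/F$, a priori defined on $I\setminus Z(F)$, extends to a real-analytic (in fact ratio-of-analytic) function on all of $I$, and $|H|=1$ everywhere by continuity from $I\setminus Z(F)$.

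Then comes the short computational heart of the argument. On the open dense set $I\setminus Z(F)$, write $G = HF$, so
\[
G' \bar G = (H'F + HF')\,\bar H\,\bar F = |H|^2 F'\bar F + H'\bar H\,|F|^2 = F'\bar F + H'\bar H\,|F|^2,
\]
using $|H|=1$. The hypothesis $F'\bar F = G'\bar G$ then yields $H'\bar H\,|F|^2 = 0$, and since $|H|=1$ and $|F|\neq 0$ on $I\setminus Z(F)$, we conclude $H' = 0$ there.

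Finally, since $H$ is analytic (or at least continuous) on the connected interval $I$ and its derivative vanishes on the open dense set $I\setminus Z(F)$, $H$ must be constant on $I$, with value some $c\in\mathbb{T}$. Hence $G = cF$. The only step requiring any care is the extension of $H$ across the zeros of $F$, and the zero-order matching forced by $|F|^2=|G|^2$ makes this transparent; I do not foresee another genuine obstacle.
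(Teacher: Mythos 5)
Your proof is correct, and its computational heart is the same as the paper's: both arguments boil down to showing that the logarithmic derivative of $G/F$ vanishes, since $(\log(G/F))'=G'\bar G/|G|^2-F'\bar F/|F|^2=0$ is exactly your identity $H'\bar H\,|F|^2=0$ in disguise. The only real difference is how the zeros of $F$ are handled: the paper simply restricts to a nonempty subinterval $I'$ on which $|F|^2=|G|^2$ has no zeros, concludes $G=\lambda F$ there, and then invokes the identity theorem to propagate the equality to all of $I$; you instead prove that $|F|^2=|G|^2$ forces matching zero orders, so that $H=G/F$ extends analytically across $Z(F)$ and the argument runs globally. Your route costs an extra (correct) lemma on zero-order matching but yields the slightly stronger intermediate fact that $G/F$ is everywhere analytic of modulus one; the paper's route is shorter. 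One small caution: your parenthetical ``(or at least continuous)'' in the last step would not suffice on its own --- a continuous function whose derivative vanishes on an open dense set need not be constant (the Cantor function is the standard example) --- but since you have already established that $H$ is analytic, $H'$ is continuous, vanishes on a dense set, hence vanishes identically, and the conclusion stands.
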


\begin{proof}
Suppose that $F,G\in \mathcal{A}(I)$ are such that $|F|^2=|G|^2$ and $F'\bar{F}=G'\bar{G}$.
We may assume w.l.o.g. that $|F|^2$ does not vanish identically. Therefore there exists a nonempty interval $I'\subseteq I$ such that $|F|^2=|G|^2$ has no zeros in $I'$.
Moreover, according to the assumption we have that 
\[
 (\log (G/F))' = (\log G)' - (\log F)' = \frac{G'\overline{G}}{G\overline{G}} - \frac{F'\overline{F}}{F\overline{F}} =0,
\]
i.e. $\log(G/F)$ is constant on $I'$. This implies that $G=\lambda F$ on $I'$ for some $\lambda\in\mathbb{C}$. Since $|G|^2=|F|^2$ we get that $\lambda$ must be unimodular. 
Finally, by analyticity the identity $G=\lambda F$ extends to all of $I$.
\end{proof}

We are now in position to prove the second part of the theorem:

\begin{proposition}\label{prop:P1}
Let $\gamma_1=\gamma$ be the standard Gaussian and let $\gamma_2,\gamma_3$ be defined by
 \[
\gamma_2(t):=2\pi t\gamma(t), \qquad \gamma_3(t):= (1-2\pi t) \gamma(t).
 \]
Let $\ffi,\psi\in L^2(\R)$ be two wave functions such that $|\ff[\gamma_j\ffi]|=|\ff[\gamma_j\psi]|$
for $j=1,2$ and $3$. Then $\ffi$ and $\psi$ are equivalent up
to a unimodular constant only.
\end{proposition}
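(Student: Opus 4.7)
The plan is to recast the three measurements as information about a single entire function and then invoke Lemma~\ref{lem:uniqueness}. For $\ffi\in L^2(\R)$ the product $\gamma_1\ffi$ lies in $L^1(\R)$ and decays faster than any exponential, so its Fourier transform
\[
F_\ffi(\xi):=\ff[\gamma_1\ffi](\xi)
\]
extends to an entire function. The first key observation is that, since $\gamma_1'(t)=-\gamma_2(t)$, differentiation under the integral gives $F_\ffi'(\xi)=-i\ff[\gamma_2\ffi](\xi)$, and since $\gamma_3=\gamma_1-\gamma_2$, we have $\ff[\gamma_3\ffi]=F_\ffi-iF_\ffi'$. Doing the same for $\psi$, the three hypotheses translate into the identities, valid on $\R$,
\begin{equation*}
|F_\ffi|^2=|F_\psi|^2,\qquad |F_\ffi'|^2=|F_\psi'|^2,\qquad |F_\ffi-iF_\ffi'|^2=|F_\psi-iF_\psi'|^2.
\end{equation*}

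Next I would extract the ``hermitian product'' $F_\ffi'\overline{F_\ffi}$ from this data. Expanding the third identity and subtracting the first two yields
\[
\Im\bigl(F_\ffi'\overline{F_\ffi}\bigr)=\Im\bigl(F_\psi'\overline{F_\psi}\bigr).
\]
To recover the real part, I would differentiate the first identity with respect to the real variable $\xi$: since $F_\ffi,F_\psi$ are entire, $|F_\ffi|^2$ is real-analytic on $\R$ and
\[
\bigl(|F_\ffi|^2\bigr)'(\xi)=2\Re\bigl(F_\ffi'(\xi)\overline{F_\ffi(\xi)}\bigr),
\]
so $\Re(F_\ffi'\overline{F_\ffi})=\Re(F_\psi'\overline{F_\psi})$ on $\R$ as well. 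Combining the real and imaginary parts gives $F_\ffi'\overline{F_\ffi}=F_\psi'\overline{F_\psi}$ on $\R$.

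At this point Lemma~\ref{lem:uniqueness}, applied with $I=\R$ to the restrictions of $F_\ffi$ and $F_\psi$ to the real line, delivers a unimodular constant $c$ with $F_\psi=cF_\ffi$ on $\R$ (the degenerate case $F_\ffi\equiv 0$ forces $\gamma_1\ffi\equiv 0$, hence $\ffi=0$, and there is nothing to prove). Injectivity of the Fourier transform on $L^1(\R)$ then yields $\gamma_1\psi=c\,\gamma_1\ffi$ almost everywhere, and because $\gamma_1$ vanishes nowhere we conclude $\psi=c\ffi$.

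The only subtle step is the differentiation argument that recovers $\Re(F_\ffi'\overline{F_\ffi})$ from the already established identity $|F_\ffi|^2=|F_\psi|^2$; everything else is a short algebraic manipulation plus the application of Lemma~\ref{lem:uniqueness}. I do not expect a serious obstacle, since the analyticity of $F_\ffi,F_\psi$ makes the pointwise differentiation rigorous and the masks were precisely designed so that $\gamma_2$ encodes the derivative and $\gamma_3$ encodes a combination that mixes $F$ and $F'$ non-trivially.
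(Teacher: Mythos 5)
Your proposal is correct and follows essentially the same route as the paper: translate the three measurements into $|F|$, $|F'|$, $|F\mp iF'|$ for $F=\ff[\gamma_1\ffi]$, recover $\Im(F'\bar F)$ by polarization and $\Re(F'\bar F)=\tfrac12(|F|^2)'$ by differentiating the first identity, and conclude via Lemma~\ref{lem:uniqueness}. (Your sign $F'=-i\ff[\gamma_2\ffi]$ is in fact the correct one for the paper's Fourier normalization; the paper writes $+i$, but this is immaterial since only moduli and a consistently-signed imaginary part enter the argument.)
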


\begin{proof}
First we define a pair of analytic function on the real line by 
$F:=\ff[\gamma_1\ffi]$ and $G:=\ff[\gamma_1\psi]$.

It is enough to show that $|\ff[\gamma_j\ffi]|=|\ff[\gamma_j\psi]|$ for $j=1,2,3$ implies that
$F=\lambda G$ with $|\lambda|=1$ since then $\gamma_1\ffi=\lambda\gamma_1\psi$. Then, as 
$\gamma_1$ does not vanish, we get that $\ffi$ and $\psi$ are equivalent up to a constant phase factor.

\smallskip

Now note that $F'=i\ff [\gamma_2 \ffi]$, and therefore that we have the identities 
\begin{equation*}
  |F|= |\ff[\gamma_1 \ffi]|, \qquad |F'|= |\ff[\gamma_2 \ffi]|, \qquad |F+iF'|=|\ff[\gamma_3 \ffi]|.
\end{equation*}
Thus, it remains to show that a function $F$ analytic on the real line, is uniquely determined given $|F|, |F'|$ and $|F+iF'|$. 
To see this, first compute 
$$
|F+i F'|^2 = |F|^2 + |F'|^2 +2\Re \left\{iF' \bar{F} \right\} = |F|^2 + |F'|^2 -2\Im \left\{F' \bar{F} \right\}.
$$
Together with 
$$
\Re\left\{F'\bar{F}\right\}= \frac12 \left( F'\bar{F}+\overline{F'}F\right) = \frac12 \left(|F|^2\right)'
$$
we get that 
\begin{equation}
\label{eq:pfw21}
 F'\bar{F} = \frac{1}{2} \left(|F|^2 \right)' + \frac{i}2 \left(|F|^2+|F'|^2- |F+iF'|^2 \right).
\end{equation}
It follows that $|F|=|G|$ and that $F'\overline{F}=G'\overline{G}$. Applying Lemma 
\ref{lem:uniqueness} yields the desired statement. 
\end{proof}

\begin{remark}
In the next section, we are going to prove that $|\ff[\gamma_j \ffi]|=|\ff[\gamma_j\psi]|$ for $j=1,2$
implies that $\ffi=c\psi$ or $\ffi=c\psi^*$. In this last case $G=c\overline{F}$.
But then $|\gamma_3 \ffi|=|\gamma_3\psi|$ reads 
$|F+iF'|^2=|\bar F+i\bar F'|^2$ which implies that $\Im \left\{F' \bar{F} \right\}=0$.
But then \eqref{eq:pfw21} simplifies to $F'\bar{F} = \frac{1}{2} \left(|F|^2 \right)'=G'\bar{G}$.
Again lemma \ref{lem:uniqueness} yields the desired statement. 

The direct proof given here is substentially simpler.
\end{remark}

\begin{remark}\label{rk:other}
The Gaussian $\gamma_1$ only plays a mild role here: 

-- it implies that $F=\ff[f\gamma_1]$ is holomorphic in a neighborhood of the real line
so that we may replace $\gamma_1$ by any function that is $O(e^{-a|x|})$ for some $a>0$,

-- $\gamma_1$ does not vanish on a set of positive measure so that $f$ is uniquely determined by $f\gamma_1$.

This shows that we could replace $\gamma_1$ by {\it e.g} $e^{-a|x|^\alpha}$, $a>0$, $\alpha\geq 1$.
\end{remark}

\subsection{Two Measurements}

We are now going to prove Theorem \ref{th:intro}\,\eqref{w1}.
The origin of our choice for the three operators comes from the work of Mc\,Donald \cite{Mc}
who characterized entire functions of finite order $F,G$ such that $|F|=|G|$
on the real line and $|F'|=|G'|$. Once one notices that $F=\ff[\gamma_1\psi]$
is entire of order 2, Mc\,Donald's result applies directly. We here propose another strategy of proof
that does not use the growth properties of $F$.
In fact, our arguments do not even require that the functions under consideration are entire
only that they are holomorphic in a neighborhood of the real line.

\begin{lemma}\label{lem:const}
Let $D\subseteq \mathbb{C}$ be a nonempty, open disk centered on the real line. 
Let $u,v$ be two smooth real valued functions
 such that $h(z):=u(x,y)+iv(x,y)$ is holomorphic in $z=x+iy\in D$. Assume that $u$ satisfies 
$$
u(x,0)=0  \quad \text{and} \quad
\partial_y u(x,0)=0 \quad \text{for\ all}\ x\in D \cap \mathbb{R}.
$$
Then $h=ia$ for some $a\in \mathbb{R}$.
\end{lemma}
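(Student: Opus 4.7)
The plan is to reduce the statement to the identity theorem for the holomorphic derivative $h'$. The hypothesis gives two pieces of information about $u$ on the real diameter $D\cap\R$: it vanishes identically there, and its normal derivative $\partial_y u$ also vanishes there. I want to convert these into the vanishing of $h'$ on $D\cap\R$.

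First, differentiating the identity $u(x,0)=0$ with respect to $x$ yields $\partial_x u(x,0)=0$ for all $x\in D\cap\R$. Combined with the second hypothesis $\partial_y u(x,0)=0$, both partial derivatives of $u$ vanish on the real segment. Invoking the Cauchy--Riemann equations $\partial_x u=\partial_y v$ and $\partial_y u=-\partial_x v$, I deduce that $\partial_x v(x,0)=0$ and $\partial_y v(x,0)=0$ as well.

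Next I recall that $h'(z)=\partial_x u(x,y)+i\partial_x v(x,y)$. By the previous step, $h'$ vanishes on $D\cap\R$, a set with an accumulation point in the connected open set $D$. Since $h'$ is holomorphic on $D$, the identity theorem forces $h'\equiv 0$ on $D$, so $h$ is constant on $D$.

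It remains to identify the constant. On the real segment we have $h(x,0)=u(x,0)+iv(x,0)=iv(x,0)$, so the constant value of $h$ is purely imaginary, that is $h\equiv ia$ for $a:=v(x_0,0)\in\R$ (any $x_0\in D\cap\R$). The argument is short and essentially routine once one notices that both $\partial_x u$ and $\partial_y u$ vanish on the real axis; there is no real obstacle, only the mild observation that differentiating the boundary identity tangentially is what supplies the missing partial derivative needed to trigger Cauchy--Riemann.
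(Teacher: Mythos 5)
Your proof is correct. Where the paper expands $h$ in a power series around the center of $D$ and reads the hypotheses off the coefficients ($u(x,0)=0$ forces every $a_k$ to be purely imaginary, and $\partial_y u(x,0)=0$ forces $\Im\{a_k\}=0$ for $k\ge 1$, leaving only a purely imaginary $a_0$), you instead differentiate the identity $u(x,0)=0$ tangentially to get $\partial_x u=0$ on the real diameter, feed this and $\partial_y u=0$ into the Cauchy--Riemann equations to conclude $h'=0$ on $D\cap\mathbb{R}$, and then invoke the identity theorem for the holomorphic function $h'$. Both arguments are elementary and of comparable length; yours avoids the normalization to a disk centered at the origin and the coefficient bookkeeping, at the cost of explicitly appealing to the identity theorem (which the paper's coefficient computation encodes implicitly). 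Your route also makes transparent exactly which derivative information is being used and would adapt without change to any domain whose intersection with $\mathbb{R}$ is a nonempty interval, whereas the power-series argument is tied to a disk. The final identification of the constant as purely imaginary is handled the same way in both proofs.
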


\begin{proof}
 Without loss of generality we assume that $D$ is centered at the origin.
 We expand $h(z)=\sum_{k\in\mathbb{N}} a_k z^k$ as a power series. 
 The identity 
 \begin{equation*}
  0 = u(x,0) = \Re \left\{ \sum_{k\ge 0} a_k x^k \right\} = \sum_{k\ge 0} \Re \{a_k\} x^k
 \end{equation*}
 implies that each of the coefficients $(a_k)_{k\ge 0}$ is purely imaginary. Using that $\frac{\partial}{\partial y} h(x+iy) = \sum_{k\ge 1} a_k i k (x+iy)^{k-1}$ yields together with the second assumption that 
 \begin{equation*}
  0 = u_y(x,0) = \Re \left\{h_y(x,0) \right\}= \Re \left\{i \sum_{k\ge 1} a_k k x^{k-1}\right\} = -  \sum_{k\ge 1} \Im\{a_k\} k x^{k-1},
 \end{equation*}
 which implies that $\Im\{a_k\}= 0$ for $k\ge 1$.\\
 Therefore we have indeed that $a_k=0$ for $k\ge 1$ and $\Re\{a_0\}=0$, which yields the desired statement.
\end{proof}
Moreover, we require the following connection between
the complex derivative of an analytic function and the gradient of its modulus.
\begin{lemma}\label{lem:gradientmod}
 Let $D\subseteq \mathbb{C}$ be a domain in the complex plane and $h\in\mathcal{O}(D)$.
 Then it holds for all $z\in D$ with $h(z)\neq 0$ that 
 $$
 |\nabla |h|(z)| = |h'(z)|.
 $$
\end{lemma}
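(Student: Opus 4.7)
The plan is to prove this classical identity by a short direct computation using the Cauchy--Riemann equations. The statement is pointwise, so we only have to work at a single point $z_0$ where $h(z_0)\neq 0$, and by the continuity of $h$ we are free to work in a small neighbourhood of $z_0$ on which $h$ does not vanish, so that $|h|$ is a smooth function there.

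Write $h=u+iv$ with $u,v$ smooth real-valued functions on $D$. Then $|h|^2=u^2+v^2$, and at any point where $h\neq 0$ one has
\begin{equation*}
\partial_x|h|=\frac{uu_x+vv_x}{|h|},\qquad \partial_y|h|=\frac{uu_y+vv_y}{|h|}.
\end{equation*}
Using the Cauchy--Riemann equations $u_x=v_y$ and $u_y=-v_x$, the numerators become $uu_x-vu_y$ and $uu_y+vu_x$ respectively, and expanding the sum of their squares gives
\begin{equation*}
(uu_x-vu_y)^2+(uu_y+vu_x)^2=(u^2+v^2)(u_x^2+u_y^2),
\end{equation*}
the cross terms cancelling. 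Therefore $|\nabla|h|(z)|^2=u_x^2+u_y^2$.

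On the other hand $h'=u_x+iv_x$, so $|h'(z)|^2=u_x^2+v_x^2=u_x^2+u_y^2$ by a second application of Cauchy--Riemann. The two expressions agree, which gives $|\nabla|h|(z)|=|h'(z)|$.

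There is no real obstacle here; the only thing to be careful about is that the formula only makes sense pointwise where $|h|$ is differentiable, i.e.\ away from the zero set of $h$, which is exactly the hypothesis. An alternative, essentially equivalent, route would be to pick a local holomorphic logarithm $g$ of $h$ (available on any simply-connected neighbourhood of $z_0$ contained in $\{h\neq 0\}$), write $|h|=e^{\Re g}$, and use the standard fact $|\nabla\Re g|=|g'|$ together with $h'/h=g'$; but the direct Cauchy--Riemann computation above is shorter and avoids any appeal to simple connectedness.
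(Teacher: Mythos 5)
Your computation is correct, and it is exactly the route the paper has in mind: the paper's ``proof'' consists of the remark that the identity follows elementarily from the Cauchy--Riemann equations and a citation to \cite[Lemma 3.4]{grohsstable19}, whereas you carry out that Cauchy--Riemann computation in full. Nothing is missing; your observation that one only needs to work near a point where $h\neq 0$ (so that $|h|$ is smooth there) is the right precaution.
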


\begin{proof}
 This can be shown rather elementary using Cauchy-Riemann equations.
 See \cite[Lemma 3.4]{grohsstable19} for a proof.
\end{proof}

\begin{lemma}
\label{lem:der}
Let $F,G$ be two analytic functions and assume that for every $x\in\R$,
$|F(x)|=|G(x)|$ and $|F'(x)|=|G'(x)|$. Then there exists $c\in\C$ with $|c|=1$
such that either $G=cF$ or $G=c\bar{F}$.
\end{lemma}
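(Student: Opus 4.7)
The plan is to combine the two modulus hypotheses into an algebraic relation on $\R$, extract a global sign dichotomy by real-analytic rigidity, and then, in each branch, apply Lemma \ref{lem:const} to the logarithm of a suitable holomorphic quotient of $F$ with either $G$ or its Schwarz reflection.

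First, I would differentiate $|F|^2 = |G|^2$ on $\R$ to obtain $\Re(F'\bar F) = \Re(G'\bar G)$, and multiply the two hypotheses to obtain $|F'\bar F|^2 = |G'\bar G|^2$. Together these give
\[
\bigl(\Im(F'\bar F) - \Im(G'\bar G)\bigr)\bigl(\Im(F'\bar F) + \Im(G'\bar G)\bigr) \equiv 0 \qquad \text{on } \R.
\]
Each factor is real-analytic on the connected set $\R$, and a real-analytic function on a connected one-dimensional set either vanishes identically or has a discrete zero set. The product being identically zero therefore forces one of the two factors to vanish identically, yielding
\[
\Im(F'\bar F) = \varepsilon\,\Im(G'\bar G) \qquad \text{on } \R
\]
for a single sign $\varepsilon \in \{+1, -1\}$.

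Next, for the branch $\varepsilon = +1$, I would fix a disk $D$ centered on a real point where $F$ does not vanish (possible since the zeros of $F$ on $\R$ are discrete) and set $h(z) := \log\bigl(G(z)/F(z)\bigr)$ on $D$ in a local holomorphic branch. Its real part $\log|G/F|$ vanishes on $D \cap \R$ since $|F| = |G|$, and a direct Cauchy--Riemann computation---essentially Lemma \ref{lem:gradientmod}---yields
\[
\partial_y\,\Re h(x, 0) = \frac{\Im(F'\bar F)(x) - \Im(G'\bar G)(x)}{|F(x)|^2},
\]
which vanishes by the $\varepsilon = +1$ hypothesis. Lemma \ref{lem:const} then forces $h \equiv ia$ on $D$ for some $a \in \R$, so that $G = e^{ia}\,F$ on $D \cap \R$ and hence on all of $\R$ by analyticity. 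For the branch $\varepsilon = -1$, I would rerun the same argument with $F$ replaced by its Schwarz reflection $F^\#(z) := \overline{F(\bar z)}$; on $\R$ one has $|F^\#| = |F|$, $|(F^\#)'| = |F'|$, and crucially $\Im\bigl((F^\#)'\,\overline{F^\#}\bigr) = -\Im(F'\bar F)$, so that the pair $(F^\#, G)$ satisfies the $\varepsilon = +1$ hypothesis. The resulting identity $G = c\,F^\#$ on $\R$ translates to $G(x) = c\,\bar F(x)$ with $|c| = 1$.

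The main obstacle I expect is the dichotomy step: one has to rule out a mixed scenario in which the sign $\varepsilon$ changes across different subintervals of $\R$. Handling this really needs analyticity of $F$ and $G$, so that the two factors above are themselves real-analytic on the whole of $\R$, combined with connectedness. Once the dichotomy is global, plugging it into Lemma \ref{lem:const} in each branch is a clean application of the machinery already in place.
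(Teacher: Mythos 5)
Your proof is correct and follows essentially the same route as the paper's: a sign dichotomy on the real line followed by an application of Lemma~\ref{lem:const} to $\log(G/F)$ in one branch and to the logarithm of the quotient with the Schwarz reflection of $F$ in the other. The only differences are cosmetic --- you phrase the dichotomy in terms of $\Im(F'\bar F)$ where the paper works with $\partial_y|F|=-\Im(F'\bar F)/|F|$ via Lemma~\ref{lem:gradientmod}, and your explicit real-analyticity argument for the globality of the sign makes precise a step the paper leaves implicit.
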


This result can be found in \cite{Mc} for entire functions of finite order and in \cite{KJP}
for so-called wide-banded functions.

\begin{proof}
We resort to a nonempty open disk $D$ centered on the real line such that neither $F$ nor $G$ has any zeros in $D$. Note that such a disk always exists unless $F$ (or $G$) vanishes identically, in which case the statement is trivial.

By Lemma \ref{lem:gradientmod} we have for all $x\in D\cap \mathbb{R}$ that  
\begin{align*}
(\partial_y |G|)^2(x+i0) &= |G'(x+i0)|^2- (\partial_x |G|)^2(x+i0) \\ 
&= 
|F'(x+i0)|^2- (\partial_x |F|)^2(x+i0) = (\partial_y |F|)^2(x+i0),
\end{align*}
which implies that either 
\begin{center}
a) $\partial_y |G|=\partial_y |F|$ on $D\cap\mathbb{R}$ \quad or \quad
b) $\partial_y |G|=-\partial_y |F|$ on $D\cap\mathbb{R}$.
\end{center}
In case a) we consider 
$h:=\log(G/F)\in \mathcal{O}(D)$ and observe that due to $|F(x)|=|G(x)|$ for real $x$,
$$
\Re\{h\}(x+i0) = \log |G/F| (x+i0)  = 0, \quad x\in D\cap \mathbb{R}.
$$
Moreover, we have that 
\begin{align*}
 \partial_y \Re \{h\} (x+i0) &= \partial_y (\log |G| - \log |F|)(x+i0)\\
 &=  \left(\frac{\partial_y |G|}{|G|} - \frac{\partial_y|F|}{|F|}\right)(x+i0) = 0,\quad x\in D\cap \mathbb{R}.
\end{align*}
Applying Lemma \ref{lem:const} yields that $h=ic$ with $c\in\mathbb{R}$, which implies that 
$$
G/F = \exp h = e^{ic},
$$
and therefore $G=e^{ic}F$ as desired (by analyticity the identity holds on the full plane).\\
In case b) one considers $h:= \log G/\bar{F}$ and proceed similarly as in case a).
\end{proof}

We can now show \eqref{w1} of Theorem \ref{th:intro}:

\begin{proposition}\label{thm:uniqueness2msmts}
 Let $\gamma$ denote the standard Gaussian and let
 \begin{equation}
  \gamma_1(t):= \gamma(t) \quad\text{and}\quad  \gamma_2(t):= 2\pi t\gamma(t).
 \end{equation}
Assume that $\ffi,\psi\in L^2(\R)$ are two wave functions
such that $|\ff[\gamma_k\ffi]|=|\ff[\gamma_k\psi]|$ for $k=1,2$. Then
$\ffi$ and $\psi$ are equivalent up to a constant phase factor and conjugation.
\end{proposition}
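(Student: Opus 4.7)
My plan is to reduce the proposition to Lemma \ref{lem:der} via a short Fourier--transform bookkeeping argument. I would set $F := \ff[\gamma_1\ffi]$ and $G := \ff[\gamma_1\psi]$. Since $\gamma_1$ is Gaussian, a Cauchy--Schwarz bound on $\int|\gamma_1(t)\ffi(t)e^{-2\pi i tz}|\,dt$ (using that $\ffi,\psi\in L^2(\R)$ and $\gamma_1(t)e^{2\pi|\eta| t}\in L^2(\R)$ for every $\eta\in\R$) shows that $F$ and $G$ extend to entire functions on $\C$, so in particular they are analytic in a neighbourhood of the real line as required by the lemma. Differentiating under the integral sign I get $F'(\xi)=-i\,\ff[2\pi t\,\gamma_1\ffi](\xi)=-i\,\ff[\gamma_2\ffi](\xi)$, and likewise for $G$. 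Hence the hypotheses $|\ff[\gamma_k\ffi]|=|\ff[\gamma_k\psi]|$ for $k=1,2$ translate \emph{exactly} into the two identities $|F|=|G|$ and $|F'|=|G'|$ on $\R$.

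Next I would apply Lemma \ref{lem:der} directly to the entire functions $F,G$ to obtain the dichotomy $G=cF$ or $G=c\overline{F}$ with $|c|=1$. It remains to pull each branch back to the physical side. In the first case, Fourier injectivity together with the non-vanishing of $\gamma_1$ immediately gives $\psi=c\ffi$. In the second case, a short computation is needed: because $\gamma_1$ is real and even, the substitution $t\mapsto -t$ yields
\[
\overline{F(\xi)}=\int \gamma_1(s)\,\overline{\ffi(-s)}\,e^{-2\pi i s\xi}\,ds = \ff[\gamma_1\ffi^*](\xi),
\]
where $\ffi^*(t)=\overline{\ffi(-t)}$. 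Consequently $\gamma_1\psi=c\gamma_1\ffi^*$, and, again by non-vanishing of $\gamma_1$, $\psi=c\ffi^*$, which is the remaining alternative appearing in Theorem \ref{th:intro}\,\eqref{w1}.

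All the substantial analytic content sits inside Lemma \ref{lem:der}, so what remains above is essentially bookkeeping. The one place demanding a little care is the translation of the ``$G=c\overline{F}$'' alternative: one has to remember that complex conjugation on the Fourier side corresponds, via the evenness of $\gamma_1$, to the involution $\ffi\mapsto\ffi^*$ rather than to plain complex conjugation --- this is exactly what produces the conjugate--reflection ambiguity. Apart from this, no real obstacle is expected; the proof should be only a few lines.
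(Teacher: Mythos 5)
Your proof is correct and follows essentially the same route as the paper: set $F=\ff[\gamma_1\ffi]$, $G=\ff[\gamma_1\psi]$, note that these are entire and that the hypotheses translate exactly into $|F|=|G|$ and $|F'|=|G'|$ on $\R$, invoke Lemma \ref{lem:der}, and pull the dichotomy back through the Fourier transform using that $\gamma_1$ never vanishes. In fact your treatment of the branch $G=c\overline{F}$ is more careful than the paper's: you correctly identify the resulting ambiguity as the conjugate reflection $\psi=c\ffi^*$ (as stated in Theorem \ref{th:intro}\,\eqref{w1}), whereas the paper's proof of this proposition ends with $\psi=c\overline{\ffi}$, which is a slip since plain conjugation turns $|F(\xi)|$ into $|F(-\xi)|$ rather than preserving $|F|$.
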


\begin{proof}
Once more, we set $F:=\ff[\gamma_1\ffi]$ and $G:=\ff[\gamma_1 \psi]$
and note that these functions are analytic, even more so they extend to entire functions on the plane.
According to the assumption we have that 
\begin{equation}\label{id:modrealline}
|F(x+i0)|=|G(x+i0)| \quad\text{for all}~ x\in\mathbb{R},
\end{equation}
as well as
\begin{equation*}
 |F'(x+i0)| = |\ff[\gamma_2 f](x)| = |\ff[\gamma_2 g](x)| = |G'(x+i0)|, \quad x\in\mathbb{R}.
\end{equation*}
Lemma \ref{lem:der} then shows that $G=cF$ or $G=c\bar F$ which is equivalent to
$\ff^*\psi=c\ff^*\ffi$ or $\ff^*\psi(\xi)=c\overline{\ff^*[\ffi](-\xi)}$ since $\gamma_1=\gamma_1^*$ does not vanish. In turn, this is then equivalent to $\psi=c\ffi$ or $\psi=c\overline{\ffi}$.
\end{proof}

\begin{remark}
We have only used that $F$ is holomorphic in a neighborhood of the real line.
As for Remark \ref{rk:other}, the same proof thus applies if $\gamma$ is replaced by $e^{-a |x|^\alpha}$, $a>0$,
$\alpha\geq 1$.
Note the for $\alpha=1$, $F$ is only holomorphic in a strip.
\end{remark}

\subsection{A second family of three operators}

\begin{proposition}\label{thm:uniqueness2}
 Let $\gamma$ denote the standard Gaussian and let $a,b>0$ be such that $\dfrac{a}{b}\notin\Q$, and let
 \begin{equation} 
  \gamma_1(t):= \gamma(t),\quad  \gamma_2(t):= \sin(a\pi t)\gamma(t)
\quad\text{and}\quad \gamma_3(t):= \sin(b\pi t)\gamma(t)
 \end{equation}
Assume that $\ffi,\psi\in L^2(\R)$ are two wave functions
such that $|\ff[\gamma_k\ffi]|=|\ff[\gamma_k\psi]|$ for $k=1,2$ and $3$. Then
$\ffi$ and $\psi$ are equivalent up to a constant phase factor.
\end{proposition}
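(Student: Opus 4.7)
The plan is to translate the three phaseless measurements into identities for entire functions, exploit analytic continuation to obtain a dichotomy for each of the two shifts $a,b$, and then combine the two dichotomies with the irrationality $a/b\notin\Q$.

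Set $F:=\ff[\gamma\ffi]$ and $G:=\ff[\gamma\psi]$; the Gaussian window makes both $F$ and $G$ entire of order at most $2$. By the modulation identity, $\ff[\sin(c\pi t)\gamma\ffi](\xi)=\tfrac{1}{2i}\bigl[F(\xi-c/2)-F(\xi+c/2)\bigr]$, and likewise for $\psi$, so the three hypotheses translate to $|F|=|G|$, $|F(\cdot-a/2)-F(\cdot+a/2)|=|G(\cdot-a/2)-G(\cdot+a/2)|$ and its $b$-analogue, all on $\R$. Writing $F^*(z):=\overline{F(\bar z)}$, the first identity extends by analytic continuation to $F\,F^*=G\,G^*$ on $\C$. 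Expanding the second identity, using $|F(\xi\pm a/2)|=|G(\xi\pm a/2)|$ to cancel the diagonal terms, and continuing analytically yields
\[
F(u+a)F^*(u)+F(u)F^*(u+a)=G(u+a)G^*(u)+G(u)G^*(u+a),\qquad u\in\C.
\]
Setting $R:=G/F$ (meromorphic on $\C$, with $R\,R^*\equiv 1$ by the first step) and $\Lambda(u):=F(u+a)F^*(u)\big/\bigl(F(u)F^*(u+a)\bigr)$, a short algebraic manipulation rewrites this as $\bigl(\rho(u)-1\bigr)\bigl(\rho(u)-\Lambda(u)\bigr)=0$ where $\rho(u):=R(u)/R(u+a)$. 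Since the field of meromorphic functions on $\C$ has no zero divisors, one of the factors must vanish identically, giving the dichotomy: either $(A_a)$ $R$ is $a$-periodic, or $(B_a)$ $G/F^*$ is $a$-periodic. Repeating with $b$ in place of $a$ yields an analogous dichotomy $(A_b)/(B_b)$.

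In the principal branch $(A_a)\wedge(A_b)$, $R=G/F$ is a meromorphic function on $\C$ admitting both $a$ and $b$ as periods. Because $a/b\notin\Q$, the additive subgroup $a\Z+b\Z$ is dense in $\R$; restricted to any connected component of $\R$ minus the discrete zero/pole set of $FG$, the function $R$ is continuous and admits a dense set of periods, hence is constant on that component. The analytic identity principle then forces $R\equiv c$ on all of $\C$, and $R\,R^*=1$ gives $|c|=1$. Since $\gamma$ has no zeros, this translates back to $\psi=c\ffi$, which is the stated conclusion.

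The main obstacle is to exclude the three remaining branches, all of which feature the ratio $G/F^*$. The pure branch $(B_a)\wedge(B_b)$ gives, by the same density argument, $G/F^*\equiv c$ on $\C$, i.e.\ $G=c\,F^*$; unpacked through $F^*(z)=\overline{F(\bar z)}$ together with the reality and evenness of $\gamma$, this corresponds to the conjugate-reflection identity $\psi(t)=c\,\overline{\ffi(-t)}$. The mixed branches $(A_a)\wedge(B_b)$ and $(B_a)\wedge(A_b)$ simultaneously impose an $a$-periodicity of one ratio and a $b$-periodicity of the other, which propagates (through the analogue of $(\rho-1)(\rho-\Lambda)=0$ with $R$ replaced by $G/F^*$) to a quasi-periodicity of $F/F^*$ with two incommensurable multipliers. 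The delicate step is to leverage the concrete structure of $F$ as the Fourier transform of a Gaussian-windowed $L^2$-signal (its membership in a Bargmann--Fock type space of order-$2$ entire functions) to show that such simultaneous quasi-periodicities of $F/F^*$ are incompatible with $F\not\equiv 0$, so that these three branches collapse into $(A_a)\wedge(A_b)$ and the conclusion $\psi=c\ffi$ is forced. This is the point at which the proof has to do its real work; in particular, upgrading the outcome from the natural dichotomy $\psi\in\{c\ffi,c\ffi^*\}$ to the stated $\psi=c\ffi$ alone will require the bulk of the technical effort and is where the interplay between the irrationality of $a/b$ and the Bargmann--Fock rigidity of $F$ has to be exploited most carefully.
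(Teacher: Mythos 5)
Your reduction to the entire functions $F=\ff[\gamma\ffi]$, $G=\ff[\gamma\psi]$, the translation of the second and third measurements into difference conditions $|F(\cdot)-F(\cdot\pm a)|=|G(\cdot)-G(\cdot\pm a)|$, and the factorization $(\rho-1)(\rho-\Lambda)=0$ in the field of meromorphic functions are all sound. In effect you re-derive, in a self-contained way, the dichotomy that the paper imports as a black box from McDonald's theorem: for each shift, either $G/F$ or $G/F^*$ is periodic with that period, the ratio being meromorphic and unimodular on $\R$. Your handling of the branch where $G/F$ is both $a$- and $b$-periodic coincides with the paper's argument (density of $a\Z+b\Z$ in $\R$ plus continuity forces $G/F$ to be a unimodular constant), so up to that point your route is a legitimate, more explicit variant of the published proof.

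The difficulty lies exactly where you place it, in excluding the branches involving $G/F^*$ --- but that step is not merely delicate, it is impossible, and no Bargmann--Fock rigidity will rescue it. The conjugate-reflection map $\ffi\mapsto\ffi^*$, $\ffi^*(t)=\overline{\ffi(-t)}$, is an exact symmetry of all three measurements: since $\gamma_1$ is real and even while $\gamma_2,\gamma_3$ are real and odd, one has $\ff[\gamma_1\ffi^*]=\overline{\ff[\gamma_1\ffi]}$ and $\ff[\gamma_k\ffi^*]=-\overline{\ff[\gamma_k\ffi]}$ for $k=2,3$, hence $|\ff[\gamma_k\ffi^*]|=|\ff[\gamma_k\ffi]|$ for all $k$, while $\ffi^*$ is in general not a unimodular multiple of $\ffi$. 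Equivalently, $G=F^*$ realizes your pure branch $(B_a)\wedge(B_b)$. The conclusion of the proposition therefore has to be weakened to equivalence up to a constant phase factor \emph{and conjugate reflection}, as in Theorem \ref{th:intro}\,\eqref{w1}; the published proof has the same blind spot, since it quotes McDonald's result in the form $G=W_aF=W_bF$ only, silently discarding the alternative $G=W_aF^*$ that your analysis correctly refuses to dismiss. (Your mixed branches $(A_a)\wedge(B_b)$ and $(B_a)\wedge(A_b)$ would still need an argument showing they collapse into one of the two pure branches, but the essential point is that the pure conjugate branch genuinely survives.) So: do not spend effort trying to force $\psi=c\ffi$; instead, fix the statement.
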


\begin{proof}
We again introduce $F=\ff[\gamma_1\ffi]$, $G=\ff[\gamma_1\psi]$
and notice that $F,G$ are entire functions of order 2
and that $|F(x)|=|G(x)|$.

Further,
using the standard fact that the Fourier transform of a modulation is the translation
of the Fourier transform and that $\sin\alpha=\dfrac{e^{i\alpha}-e^{-i\alpha}}{2i}$,
it is straightforward to see that

-- $|\ff[\gamma_2\ffi]|=|\ff[\gamma_2\psi]|$ if and only if $|F(x)-F(x-a)|=|G(x)-G(x-a)|$,

-- $|\ff[\gamma_3\ffi]|=|\ff[\gamma_3\psi]|$ if and only if $|F(x)-F(x-b)|=|G(x)-G(x-b)|$.

Applying twice the main result of \cite{Mc} we get that there exist two periodic
functions $W_a,W_b$ with repective period $a$ and $b$ and such that
both are meromorphic and continuous over $\R$ with $|W_a(x)|=|W_b(x)|=1$ for real $x$
and satisfy $G=W_aF=W_bF$. In particular, $W_a=W_b$ on $\R$ so that 
$W_a$ is both $a$ and $b$-periodic. But then for every $k,\ell\in\Z$ we have that 
$W_a(ak+b\ell)=W_a(0)$. As $a/b\notin\Q$, $\{ak+b\ell,k,\ell\in\Z\}$ is dense in $\R$
and by continuity of $W_a$ we get that $W_a$ is a constant of modulus one.
Finally, as $G=W_aF$ we get $\psi=W_a\ffi$ as claimed.
\end{proof}

\begin{remark}
The same proof applies if $\gamma$ is replaced by $e^{-\alpha |x|^p}$, $p\geq 1$.
Note the for $p=1$, $F$ would only be holomorphic in a strip and McDonald's result no longer applies.
In this case, one needs the extension of McDonald's result in \cite{KJP}.
\end{remark}

\begin{remark}
The condition $a/b\notin\Q$ is essential. Indeed, let $a\not=0$
and $b=\dfrac{p}{q}a$ with $p,q\in\Q$, $q\not=0$ and $\beta=\dfrac{q}{a}$.
We have chosen $\beta$ so that $e^{2i\pi\beta a}=e^{2i\pi\beta b}=1$.

Let $\ffi\not=0$ be smooth and compactly supported. Define $F=\ff[\gamma\ffi]$ and
define $\psi$ by
$$
\psi(t)=\frac{\gamma(t+\beta)}{\gamma(t)}\ffi(t+\beta)
=e^{-2\pi\beta t-\pi\beta^2}\ffi(t+\beta).
$$
A direct computation shows that
$$
G(x):=\ff[\gamma \psi](x)=\ff[\gamma(\cdot+\beta)\ffi(\cdot+\beta)](x)=e^{2i\pi\beta x}\ff[\gamma\ffi](x)
=e^{2i\pi\beta x}F(x).
$$
But then $|G(x)|=|F(x)|$,
$$
|G(x)-G(x+a)|=|e^{2i\pi\beta x}F(x)-e^{2i\pi\beta x}e^{2i\pi\beta a}F(x+a)|=|F(x)-F(x+a)|
$$
since $e^{2i\pi\beta a}=1$ and, replacing $a$ by $b$ in this computation, $|G(x)-G(x+b)|=|F(x)-F(x+b)|$.
The proof of Proposition \ref{thm:uniqueness2} then shows that
$|\ff[\gamma_k\ffi]|=|\ff[\gamma_k\psi]|$ for $k=1,2$ and $3$. Of course, $\psi$ is not a constant multiple of 
$\ffi$.
\end{remark}

\subsection{An extension to higher dimensions}

We will now give an extension to several variables. Let us start with a simple lemma
about several variable holomorphic functions.
We will make use of the following notation: for $j\in\{1,\ldots,d\}$ and $x=(x_1,\ldots,x_d)\in\R^d$,
write $x^{(j)}=(x_1,\ldots,x_{j-1},x_{j+1},\ldots,x_d)\in\R^{d-1}$.

\begin{lemma}\label{lem:multcomp}
Let $F,G$ be two non-zero holomorphic functions on $\C^d$ and assume that there
are functions $\ffi_1,\ldots,\ffi_d\,:\R^{d-1}\to\T$ such that, for every $j\in\{1,\ldots,d\}$
and every $x\in\R^d$, $F(x)=\ffi_j(x^{(j)})G(x)$.
Then there is a $c\in\T$ such that $F=cG$.
\end{lemma}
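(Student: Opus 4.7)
The plan is to show that on some nonempty open box $B\subseteq \R^d$ where $G$ does not vanish, the ratio $F/G$ is forced to be a unimodular constant $c$, and then to propagate the identity $F=cG$ from $B$ to all of $\C^d$ via the identity principle for several complex variables.

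First I would note that such a box $B$ exists. Indeed, since $G$ is holomorphic on $\C^d$ and not identically zero, it cannot vanish on any nonempty open subset of $\R^d$: expanding $G$ in a Taylor series at an interior point of such a set, all Taylor coefficients would vanish, forcing $G\equiv 0$ by the identity principle. So I may pick a nonempty open box $B\subseteq \R^d$ with $G(x)\neq 0$ for $x\in B$.

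Next, on $B$ the hypothesis gives $\ffi_j(x^{(j)})=F(x)/G(x)$ for every $j$ and every $x\in B$. For any two indices $j\neq k$, this yields $\ffi_k(x^{(k)})=\ffi_j(x^{(j)})$ on $B$. The right-hand side is, by definition of $\ffi_j$, independent of $x_j$, so the left-hand side must be too; that is, viewed on the projection $\pi_k(B)\subseteq \R^{d-1}$ obtained by dropping the $k$-th coordinate, $\ffi_k$ does not depend on the coordinate corresponding to $x_j$. Letting $j$ range over all indices different from $k$, we conclude that $\ffi_k$ is constant on $\pi_k(B)$. Moreover this constant must be the same for every $k$, since all the $\ffi_j(x^{(j)})$ coincide with $F(x)/G(x)$ on $B$; call the common value $c\in\T$. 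Then $F(x)=cG(x)$ on $B$.

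Finally, the holomorphic function $h:=F-cG$ on $\C^d$ vanishes on the nonempty open subset $B$ of $\R^d$. Fixing $x_0\in B$, all real partial derivatives of $h$ at $x_0$ vanish, hence so do all Taylor coefficients of $h$ at $x_0$; therefore $h$ vanishes on a polydisk around $x_0$, and by the identity principle on the connected domain $\C^d$ we get $h\equiv 0$, i.e.\ $F=cG$. I do not anticipate a serious obstacle; the only point requiring care is that the $\ffi_j$ are merely assumed to be functions into $\T$, with no regularity whatsoever, so the argument deliberately avoids any differentiation of the $\ffi_j$ and uses only the combinatorial fact that ``$\ffi_j$ is independent of the $j$-th variable'' together with the pointwise equality on $B$.
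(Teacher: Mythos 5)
Your proof is correct and follows essentially the same route as the paper: find a real ball (or box) on which neither function vanishes, observe that $F/G$ there coincides with each $\ffi_j(x^{(j)})$ and is therefore independent of every coordinate, hence a unimodular constant $c$, and then extend $F=cG$ to all of $\C^d$ by the identity principle. The only cosmetic difference is that the paper locates the non-vanishing ball via $F$ and deduces $G\neq 0$ from $|F|=|G|$, whereas you start from $G$; both are fine.
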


\begin{proof}
First, as $F$ is continuous and non-zero, there exists a ball $B(x_0,r)$ of $\R^d$
such that $F$ does not vanish on $B(x_0,r)$. Without loss of generality, we may assume that
$x_0=0$. Then as $|F(x)|=|\ffi_j(x^{(j)})||G(x)|=|G(x)|$, $G$ does also not vanish on $B(0,r)$
and therefore $\ffi_j(x^{(j)})=\dfrac{F(x)}{G(x)}$ for all $j$ does not depend on $j$.
But this implies that $\dfrac{F(x)}{G(x)}$ does not depend on any of the variables $x_1,\ldots,x_j$
on $B(0,r)$ and is thus a constant $c$ {\it i.e.} $F=c G$ on $B(0,r)$.
From the holomorphy of $F$ and $G$ we conclude that $F=cG$ on $\C^d$. 
\end{proof}

\begin{corollary}
Let $\gamma$ be the Gaussian on $\R^d$, $\gamma(t)=e^{-\pi|t|^2}$.
Let $f,g\in L^2(\R^d)$ be non-zero and such that $|\ff[\gamma f]|=|\ff[\gamma g]|$.
Assume further that one of the two following conditions are satisfied:

-- for all $j\in\{1,\ldots,d\}$, $|\ff[2\pi t_j\gamma f]|=|\ff[2\pi t_j\gamma g]|$
and $|\ff[(1-2\pi t_j)\gamma f]|=|\ff[(1-2\pi t_j)\gamma g]|$ on $\R^d$;

or

-- for all $j\in\{1,\ldots,d\}$, $|\ff[\sin \pi a_jt_j\gamma f]|=|\ff[\sin \pi a_j\gamma g]|$
and $|\ff[\sin \pi b_jt_j\gamma f]|=|\ff[\sin \pi b_j\gamma g]|$ on $\R^d$
wth $a_j,b_j>0$, $\dfrac{a_j}{b_j}\notin\Q$;

then there is a $c\in\T$ with $g=cf$.
\end{corollary}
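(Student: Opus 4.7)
The plan is to reduce the $d$-dimensional statement to the one-variable Propositions \ref{prop:P1} and \ref{thm:uniqueness2} applied coordinate by coordinate, and then to assemble the resulting slice identities using Lemma \ref{lem:multcomp}.

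First set $F := \ff[\gamma f]$ and $G := \ff[\gamma g]$. Since $\gamma$ is the $d$-dimensional Gaussian and $f,g\in L^2(\R^d)$, both $F$ and $G$ extend to entire functions on $\C^d$. Splitting $\gamma(t) = e^{-\pi t_j^2}\,e^{-\pi|t^{(j)}|^2}$ and using Fubini, for every $j\in\{1,\ldots,d\}$ and every $x^{(j)}\in\R^{d-1}$ the slice $F_j(z_j) := F(x_1,\ldots,z_j,\ldots,x_d)$ has the form $\ff[e^{-\pi s^2}h_{x^{(j)}}](z_j)$ for some $h_{x^{(j)}}\in L^2(\R)$, and similarly for $G_j$. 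In particular each slice is entire of order at most two and fits the hypotheses of the one-variable statements, and $|\ff[\gamma f]|=|\ff[\gamma g]|$ restricted to any slice reads $|F_j|=|G_j|$ on $\R$.

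Next I translate the extra assumptions into slice statements. In case 1, the standard identities $\ff[2\pi t_j\gamma f] = i\,\partial_{x_j}F$ and $\ff[(1-2\pi t_j)\gamma f] = F - i\,\partial_{x_j}F$ (and their $g$-analogues) imply $|F_j'|=|G_j'|$ and $|F_j - iF_j'|=|G_j - iG_j'|$ on $\R$ for each fixed $x^{(j)}$. The proof of Proposition \ref{prop:P1} uses only holomorphy of $F$ and $G$ near $\R$ (the Gaussian $\gamma_1$ enters merely in the last step to turn $F=\lambda G$ into $\ffi=\lambda\psi$), so it applies verbatim and produces a unimodular $\ffi_j(x^{(j)})$ with $G_j = \ffi_j(x^{(j)})\,F_j$ whenever $F_j\not\equiv 0$; on the exceptional slices where $F_j\equiv 0$ one automatically has $G_j\equiv 0$ and we set $\ffi_j(x^{(j)}):=1$. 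In case 2, writing $2i\sin(\pi a_j t_j) = e^{i\pi a_j t_j}-e^{-i\pi a_j t_j}$ and using that Fourier exchanges modulations and translations, a harmless shift of variable yields $|F_j(y)-F_j(y+a_j)|=|G_j(y)-G_j(y+a_j)|$ on $\R$ and the analogous identity for $b_j$; the proof of Proposition \ref{thm:uniqueness2}, which relies on McDonald's theorem and the irrationality of $a_j/b_j$, applies slice by slice and again produces $\ffi_j(x^{(j)})\in\T$ with $G_j = \ffi_j(x^{(j)})\,F_j$.

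In either case we have constructed functions $\ffi_1,\ldots,\ffi_d\colon\R^{d-1}\to\T$ such that $G(x)=\ffi_j(x^{(j)})\,F(x)$ for every $x\in\R^d$ and every $j$. Applying Lemma \ref{lem:multcomp} (with the roles of $F$ and $G$ swapped, and noting that no regularity of the $\ffi_j$ is required) then yields a single $c\in\T$ with $G=cF$ on all of $\C^d$. Since $\gamma$ is nowhere zero, inverse Fourier transform and division by $\gamma$ give $g=cf$. The only delicate point is the handling of slices on which $F_j$ vanishes identically, but $|F|=|G|$ forces $G_j\equiv 0$ there, the values of $\ffi_j$ on the exceptional set may be chosen freely, and the identity $G=\ffi_j F$ still holds trivially on these slices, so Lemma \ref{lem:multcomp} applies without modification.
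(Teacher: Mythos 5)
Your proof is correct and follows essentially the same route as the paper: slice along each coordinate, apply the one-dimensional Propositions \ref{prop:P1} and \ref{thm:uniqueness2} to obtain a unimodular factor depending only on the remaining $d-1$ variables, and glue the slice identities with Lemma \ref{lem:multcomp}. Your explicit handling of the degenerate slices where $F_j\equiv 0$ is a detail the paper leaves implicit, but it does not change the argument.
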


\begin{proof} In both cases, consider $F=\ff[\gamma f]$ and $G=\ff[\gamma g]$
so that $F,G$ extend to holomorphic functions over $\C^d$. 

Let us consider the first set of hypothesis. Fix $\underline{\xi}=(\xi_2,\ldots,\xi_d)\in\R^{d-1}$ and denote by 
$$
f_{\underline{\xi}}(x)=\int_{\R^{d-1}}e^{-\pi|\underline{x}|^2}f(x,\underline{x})
e^{-2i\pi\scal{\underline{x},\underline{\xi}}}\,\mbox{d}\underline{x}
$$
and use a similar notation for $g$.

Let $\gamma_1$ be the Gaussian on $\R$ and $\ff_1$ be the $1$-variable Fourier transform, then
$$
|\ff_1[\gamma_1f_{\underline{\xi}}]|=|\ff[\gamma f](\xi_1,\underline{\xi})|=
|\ff[\gamma g](\xi_1,\underline{\xi})|
=|\ff_1[\gamma_1g_{\underline{\xi}}]|
$$
and similarily $|\ff_1[2\pi t\gamma_1f_{\underline{\xi}}]|=|\ff_1[2\pi t\gamma_1g_{\underline{\xi}}]|$
and $|\ff_1[(1-2\pi t)\gamma_1f_{\underline{\xi}}]|=|\ff_1[(1-2\pi t)\gamma_1g_{\underline{\xi}}]|$.
Proposition \ref{prop:P1} then implies that there exists $c(\underline{\xi})\in\T$
such that $f_{\underline{\xi}}(x)=c(\underline{\xi})g_{\underline{\xi}}(x)$.
Multiplying by $\gamma_1$ and taking Fourier transform, we then
get $F(\xi_1,\underline{\xi})=c(\underline{\xi})G(\xi_1,\underline{\xi})$
for every $\xi_1\in\R$ and every $\underline{\xi}\in\R^{d-1}$.

Doing the same for each variable, we see that the conditions of Lemma \ref{lem:multcomp}
are fullfilled. There is then $c\in\T$ such that $F=cG$ which implies that $f=cg$.

Replacing Proposition \ref{prop:P1} by \ref{thm:uniqueness2}, we get that the same is valid for the second set of conditions.
\end{proof}

Note that one can obtain the same result by imposing the first set of condition
for some coordinates and the second set for the others.

On the other hand, taking functions of the tensor form 
$$
f(x_1,\ldots,x_d)=f_1(x_1)\cdots f_d(x_d)
$$
it is easy to see that the full set of conditions is needed.

\section{Discretizations}

\subsection{Continuous derivative}

We now turn to a discrete setting. We consider $\psi=(\psi_0,\ldots,\psi_{N-1})\in\C^N$ which
can be identified with a the polynomial
$$
P_\psi(x)=\sum_{j=0}^{N-1}\psi_j e^{2i\pi jx}.
$$
It is crucial to notice that $P_\psi$ is a so-called analytic trigonometric polynomial
{\it i.e.} it has no negative frequencies. In particular $\bar P_\psi$ is not an analytic
trigonometric polynomial and can therefore not be of the form $P_\ffi$.
We will use this fact below.

\begin{remark}
Note that that if $M\geq N$,
$$
P_\ffi\left(\frac{k}{M}\right)=\sum_{j=0}^{N-1}\psi_j e^{2i\pi jk/M}
$$
is the $M$-dimensional discrete Fourier transform $\ff_M[\psi^{(M)}]$
where $\psi^{(M)}$ is the $0$-padded sequence $\psi^{(M)}=(\psi_0,\ldots,\psi_{N-1},0,\ldots,0)\in\C^M$.

Cand\'es {\it et al} proved that $\{|\ff_N[\psi](k)|,k=0,\ldots N-1\}$ together with the two difference sequences
$\{|\ff_N[\psi](k)-\ff_N[\psi](k-1)|,k=0,\ldots N-1\}$ and
$\{|\ff_N[\psi](k)-i\ff_N[\psi](k-1)|,k=0,\ldots N-1\}$ determine almost every $\psi\in\C^N$.

One can see $\ff_N[\psi](k)-\ff_N[\psi](k-1)$ as the discrete derivative of the sequence $\ff_N[\psi](k)$
and this result can thus be seen as a discrete analogue of Theorem \ref{th:intro}\,\eqref{w2}.
\end{remark}

Instead of a discrete derivative, let us first inverstigate what is happening if we consider the continuous
derivative, that is $P_\psi^\prime(x)=\dst2i\pi\sum_{j=0}^{N-1} j\psi_j e^{2i\pi jx}=P_{j\psi}$
with $j\psi=(0,\psi_1,\ldots,(N-1)\psi_{N-1})$. 
We are here asking whether for some $M\geq N$
\begin{equation}
 \label{eq:poltrig1}
\left\{\begin{matrix}
\abs{P_\ffi\left(\frac{k}{M}\right)}&=&\abs{P_\psi\left(\frac{k}{M}\right)}\\[9pt]
\abs{P_\ffi^\prime\left(\frac{k}{M}\right)}&=&\abs{P_\psi^\prime\left(\frac{k}{M}\right)}
  \end{matrix}\right.
   \quad \text{for }k=0,\ldots,M-1
 \end{equation}
implies $P_\ffi=\lambda P_\psi$ where $|\lambda|=1$ so that $\ffi=\lambda\psi$.
In other words, we are asking
whether 
$$
\{|\ff_M[\psi^{(M)}](k)|,|\ff_M[j\psi^{(M)}](k)|,k=0\ldots,M-1\}
$$
determines $\psi$ up to a constant phase factor.

\medskip

Now notice that $|P_\psi(x)|^2=P_\psi(x)\overline{P_\psi(x)}=\sum_{j,k=0}^{N-1}\psi_j\overline{\psi_k}e^{2i\pi (j-k)x}$ is a trigonometric polynomial of degree $N$.
We may write it in the form 
$$
|P_\psi(x)|^2=e^{-2i\pi (N-1)x}\sum_{\ell=0}^{2N-2}c_\ell e^{2i\pi \ell x}
$$
which shows that, up to the factor $e^{-2i\pi (N-1)x}$, $|P_\psi(x)|^2$ is a polynomial of degree $2N-2$
evaluated on the unit circle. Therefore it is determined by $2N-1$ distinct values. 
The same applies to $|P'|$.
For instance
$$
\abs{P_\psi\left(\frac{k}{2N-1}\right)},\abs{P_\psi^\prime\left(\frac{k}{2N-1}\right)},\quad k=0,\ldots,2N-2
$$
uniquely determine $|P_\psi|,|P_\psi^\prime|$.
We can then apply Lemma \ref{lem:der} wich then shows that for $M=2N-1$, \eqref{eq:poltrig1}
implies that there is a unimodular complex number $\lambda$
such that $P_\psi(x)=\lambda P_\ffi(x)$ or $P_\psi(x)=\lambda\overline{P_\ffi}(x)$.
As said above, $P_\ffi$ and $P_\psi$ are analytic trigonometric polynomials so that the
later case can not occur.
In conclusion

\begin{proposition}
Let $\psi,\ffi\in\C^N$ and assume that the corresponding trigonometric polynomials
satisfy 
 \begin{equation*}
\left\{\begin{matrix}
\abs{P_\ffi\left(\frac{k}{2N-1}\right)}&=&\abs{P_\psi\left(\frac{k}{2N-1}\right)}\\[9pt]
\abs{P_\ffi^\prime\left(\frac{k}{2N-1}\right)}&=&\abs{P_\psi^\prime\left(\frac{k}{2N-1}\right)}
  \end{matrix}\right.
   \quad \text{for }k=0,\ldots,2N-2
 \end{equation*}
then there exists $\lambda\in\C$ with $|\lambda|=1$ such that $\psi=\lambda\ffi$.
\end{proposition}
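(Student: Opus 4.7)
The plan is to leverage the sampling argument sketched earlier in the excerpt together with Lemma \ref{lem:der}. The first step is to observe that $|P_\psi|^2$ and $|P'_\psi|^2$ can be recovered \emph{everywhere} on $\R$ from the $2N-1$ samples on $\{k/(2N-1):k=0,\ldots,2N-2\}$. Indeed, expanding the product $P_\psi\overline{P_\psi}$ one sees that $|P_\psi(x)|^2=e^{-2i\pi(N-1)x}Q(e^{2i\pi x})$ for some polynomial $Q$ of degree $2N-2$, and similarly for $|P'_\psi|^2$. Since a degree-$(2N-2)$ polynomial is determined by its values at $2N-1$ distinct points on the unit circle, the hypothesis gives $|P_\ffi(x)|=|P_\psi(x)|$ and $|P'_\ffi(x)|=|P'_\psi(x)|$ for every $x\in\R$.

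Next, since $P_\ffi$ and $P_\psi$ are finite analytic trigonometric polynomials, they extend to entire functions on $\C$. Therefore Lemma \ref{lem:der} applies (in the variable $x$, viewing these as analytic functions of a real variable that extend holomorphically to a neighborhood of $\R$), and yields a unimodular $\lambda\in\T$ such that either
\[
P_\psi=\lambda P_\ffi\qquad\text{or}\qquad P_\psi=\lambda\overline{P_\ffi}.
\]

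The last step is to rule out the second alternative. The key observation, already emphasized in the excerpt, is that $P_\ffi(x)=\sum_{j=0}^{N-1}\ffi_j e^{2i\pi jx}$ has only non-negative frequencies, whereas $\overline{P_\ffi(x)}=\sum_{j=0}^{N-1}\overline{\ffi_j}\,e^{-2i\pi jx}$ has only non-positive frequencies. Equating their Fourier coefficients forces $\ffi_j=0$ and $\psi_j=0$ for all $j\ge 1$, in which case both polynomials reduce to constants and the conclusion $\psi=\lambda\ffi$ is immediate (with the appropriate choice of $\lambda$). Thus in every case $P_\psi=\lambda P_\ffi$, which by identification of Fourier coefficients gives $\psi=\lambda\ffi$.

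I do not anticipate serious obstacles: the sampling reduction is a routine counting of degrees, and the analytic rigidity is bought from Lemma \ref{lem:der}. The only subtle point is to confirm that Lemma \ref{lem:der} is genuinely applicable here (since trigonometric polynomials are not quite polynomials, but they are entire functions of $x\in\C$, which is all that Lemma \ref{lem:der} requires) and to dispose cleanly of the conjugation ambiguity, which is where the analytic-trigonometric-polynomial structure of $P_\psi$ is essential.
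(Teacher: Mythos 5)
Your proposal is correct and follows essentially the same route as the paper: the same degree-counting argument showing that $2N-1$ samples determine $|P_\psi|^2$ and $|P_\psi'|^2$ (each being $e^{-2i\pi(N-1)x}$ times a polynomial of degree at most $2N-2$ on the unit circle), followed by Lemma \ref{lem:der} and the elimination of the conjugate alternative via the analytic (non-negative frequency) structure of $P_\ffi$ and $P_\psi$. Your explicit handling of the degenerate case where both polynomials reduce to constants is a slightly more careful version of the paper's remark that the conjugate case ``can not occur,'' but the substance is identical.
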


We are going to prove that this result is sharp in the sense that
\eqref{eq:poltrig1} for $M=2(N-1)$ is not sufficient for 
$\ffi,\psi$ to be identical up to a constant phase factor.
We start with $N=3$.

\begin{lemma}\label{lem:pols}
 Let $p(z)=z$ and $q(z)=\frac{z^2}2 + \frac{\sqrt{3}i}2$.
 Moreover, let $\Lambda=\{1,i,-1,-i\}$. \\
 Then it holds that $|p(\lambda)|=|q(\lambda)|$ and $|p'(\lambda)|=|q'(\lambda)|=1$ for all $\lambda\in \Lambda$.
\end{lemma}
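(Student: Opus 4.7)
The statement is purely a direct computation, so the plan is simply to evaluate the quantities in question at each of the four points of $\Lambda$ and observe the coincidences. There is really no obstacle here; the content of the lemma is the choice of the constants $1/2$ and $\sqrt{3}/2$, not the verification.

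First I would handle the derivatives, as they are immediate. Since $p'(z)=1$, we have $|p'(\lambda)|=1$ for every $\lambda$; since $q'(z)=z$ and $\Lambda\subset\T$, also $|q'(\lambda)|=|\lambda|=1$. This takes care of the second set of equalities.

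For $|p(\lambda)|=|q(\lambda)|$ I would exploit that $p(z)=z$ already gives $|p(\lambda)|=1$ on $\Lambda$, so the task reduces to showing $|q(\lambda)|=1$ for $\lambda\in\Lambda$. The key observation is that squaring collapses $\Lambda$ onto $\{1,-1\}$: $1^2=(-1)^2=1$ and $i^2=(-i)^2=-1$. Hence for every $\lambda\in\Lambda$ one has $\lambda^2=\varepsilon\in\{-1,+1\}$ and
\[
q(\lambda)=\frac{\varepsilon}{2}+\frac{\sqrt 3\, i}{2},
\qquad
|q(\lambda)|^2=\frac{\varepsilon^2}{4}+\frac{3}{4}=\frac14+\frac34=1.
\]
This gives $|q(\lambda)|=1=|p(\lambda)|$ simultaneously for all four points of $\Lambda$, completing the verification.

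The only ``thinking'' step, which is not part of the proof but motivates the statement, is that $\Lambda$ is the set of fourth roots of unity so that $\lambda^2$ takes only two values; this is what makes a quadratic polynomial $q$ agree in modulus with the linear polynomial $p$ at all four sample points. This is what the authors will use to engineer a counter-example to uniqueness from $M=2(N-1)=4$ samples in the regime $N=3$.
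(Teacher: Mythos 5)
Your verification is correct and follows essentially the same route as the paper: the derivative identities are immediate from $|\lambda|=1$, and the only content is checking $|q(\lambda)|=1$ on $\Lambda$, which the paper does by computing $q(\pm 1)=e^{\pi i/3}$ and $q(\pm i)=e^{2\pi i/3}$ while you equivalently note $\lambda^2\in\{\pm 1\}$ and compute the modulus directly. No issues.
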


\begin{proof}
 Obviously
  for $|z|=1$ it holds that $|p(z)|=|p'(z)|=|q'(z)|=1$. 
 Thus, it remains to check that $q(\lambda)$ is of unit modulus for $\lambda\in\Lambda$.
 Indeed we have that 
 \begin{align*}
  q(\pm 1) &= \frac{1+\sqrt{3}i}{2}=e^{\pi i/3},\\
  q(\pm i) &= \frac{-1+\sqrt{3}i}2 = e^{2\pi i/3}.
 \end{align*}
\end{proof}

\begin{proposition}\label{thm:counterexpls1}
 Let $N=2m+1$ be an odd integer $\ge 3$. 
 There exist $\ffi,\psi\in \mathbb{C}^N$ which are not equivalent up to a
 constant phase factor
 while
 \begin{equation*}
\left\{\begin{matrix}
\abs{P_\ffi\left(\frac{k}{2N-2}\right)}&=&\abs{P_\psi\left(\frac{k}{2N-2}\right)}\\[9pt]
\abs{P_\ffi^\prime\left(\frac{k}{2N-2}\right)}&=&\abs{P_\psi^\prime\left(\frac{k}{2N-2}\right)}
  \end{matrix}\right.
   \quad \text{for }k=0,\ldots,2N-3.
 \end{equation*}
\end{proposition}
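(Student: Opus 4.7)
The plan is to lift the example of Lemma \ref{lem:pols} to the higher-dimensional setting via the substitution $z \mapsto z^m$. The key observation is that, for $N = 2m+1$, the sampling points $x_k = k/(2N-2) = k/(4m)$ correspond under $x \mapsto z = e^{2i\pi x}$ to the $4m$-th roots of unity, and their $m$-th powers sweep out exactly the four fourth roots of unity $\Lambda = \{1, i, -1, -i\}$ on which the lemma applies.

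Concretely, I would define $\ffi \in \C^N$ by $\ffi_m = 1$ and $\ffi_j = 0$ for $j \neq m$, so that $P_\ffi(x) = e^{2i\pi m x} = p(z^m)$ with $z = e^{2i\pi x}$. Next, let $\psi \in \C^N$ be given by $\psi_0 = \sqrt{3}i/2$, $\psi_{2m} = 1/2$, and $\psi_j = 0$ otherwise, so that $P_\psi(x) = \sqrt{3}i/2 + e^{4i\pi m x}/2 = q(z^m)$. Since the supports of $\ffi$ and $\psi$ are disjoint (as $0 \ne m \ne 2m$ for $m \geq 1$) and both vectors are nonzero, they cannot be equivalent up to a constant phase factor.

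For the modulus identities, set $\omega_k := e^{2i\pi k/(4m)}$. Then $\omega_k^m = e^{i\pi k/2} \in \Lambda$, so Lemma \ref{lem:pols} immediately yields $|P_\ffi(x_k)| = |p(\omega_k^m)| = |q(\omega_k^m)| = |P_\psi(x_k)|$ (both equal to $1$). Differentiating gives $P_\ffi^\prime(x) = 2i\pi m\, e^{2i\pi m x}$ and $P_\psi^\prime(x) = 2i\pi m\, e^{4i\pi m x}$, so $|P_\ffi^\prime(x_k)| = 2\pi m = |P_\psi^\prime(x_k)|$ identically in $k$, which supplies the second family of identities.

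The main (and essentially only) obstacle is recognising that $z \mapsto z^m$ is the correct substitution; once one notices that this map sends the $4m$-th roots of unity precisely onto the four-point set $\Lambda$ of Lemma \ref{lem:pols} (with each image attained $m$ times), the verification is mechanical and the lemma does all the real work.
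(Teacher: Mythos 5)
Your proposal is correct and follows essentially the same route as the paper: both lift the polynomials $p,q$ of Lemma \ref{lem:pols} via the substitution $z\mapsto z^m$, observe that the sample points $k/(4m)$ are mapped onto the fourth roots of unity $\Lambda$, and note that the derivative moduli are identically $2\pi m$. Your explicit identification of the coefficient vectors and the disjoint-support argument for non-equivalence are just slightly more detailed versions of the paper's own remarks.
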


\begin{proof}
 We use the polynomials from Lemma \ref{lem:pols} and define $\ffi$ to be the sequence of the coefficients of the polynomial $\tilde{p}(z):=p(z^m)$ and analogously, $\psi$ to consist of the coefficents of $\tilde{q}(z):= q(z^m)$. In other words $P_\ffi(x)=p(e^{2im\pi x})$ and $P_\psi(x)=q(e^{2im\pi x})$.
 
But then $P_\ffi\left(\frac{k}{2N-2}\right)=P_\ffi\left(\frac{k}{4m}\right)=
p(e^{ik\pi/2})$ and analogously, $P_\psi\left(\frac{k}{2N-2}\right)=q(e^{ik\pi/2})$. In particular
$\dst\abs{P_\ffi\left(\frac{k}{2N-2}\right)}=\abs{P_\psi\left(\frac{k}{2N-2}\right)}$ for
$k=0,\ldots,2N-2$.

On the other hand 
$$
P_\ffi^\prime(x)=2i\pi m e^{2im\pi x}p'(e^{2im\pi x})
\quad\text{and}\quad
P_\psi^\prime(x)=2i\pi m e^{2im\pi x}q'(e^{2im\pi x});
$$
thus, we get that 
$|P_\ffi^\prime(x)|=|P_\psi^\prime(x)|=2\pi m$ so that $\ffi,\psi$
satisfy the condition of the theorem.
 
Finally, since the number of non-zero coefficients $\ffi$ and $\psi$ are different, it is obvious that $\ffi$ and $\psi$ are not equivalent.
\end{proof}

In view of the result by Cand\'es {\it al} \cite{CSV} it seems natural to ask

\begin{question}
For which $M$ is it true that for almost every $\ffi\in\C^N$, every $\psi\in\C^N$ such that
\begin{equation}
 \label{eq:poltrig2}
\left\{\begin{matrix}
\abs{P_\ffi\left(\frac{k}{M}\right)}&=&\abs{P_\psi\left(\frac{k}{M}\right)}\\[9pt]
\abs{P_\ffi^\prime\left(\frac{k}{M}\right)}&=&\abs{P_\psi^\prime\left(\frac{k}{M}\right)}
  \end{matrix}\right.
   \quad \text{for }k=0,\ldots,M-1
 \end{equation}
is equivalent to $\ffi$ up to a constant phase factor?
In other words, for which $M$ is the set of vectors which possess nontrivial ambiguous solutions a set of measure zero?
\end{question}

\subsection{Discrete derivative}

In this section we consider again samples of $P_\ffi$\,: $u_k=P_\ffi\left(\dfrac{k}{M}\right)$
(seen as an $M$-periodic sequence)
and we ask whether $|u_k|$ and its discrete derivative $|u_k-u_{k-1}|$ determine $\ffi$
up to a constant phase factor.

This will follow from the following proposition:

\begin{proposition}\label{prop:poldfiff}
Let $P,Q$ be two polynomials of degree $\leq N$ and $0<b<2\pi/N$. Assume that 
for every $x\in\R$, 
$$
\left\{\begin{matrix}
|P(e^{ix})|=|Q(e^{ix})|\\
|P(e^{i(x+b)})-P(e^{ix})|=|Q(e^{i(x+b)})-Q(e^{i x})|
\end{matrix}\right.
$$
then there is a unimodular constant such that $P=cQ$.
\end{proposition}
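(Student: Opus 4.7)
The plan is to mimic Proposition \ref{thm:uniqueness2}: reduce to McDonald's theorem on the real line, then exploit the polynomial structure of $P,Q$ together with the hypothesis $b<2\pi/N$ to promote the resulting $b$-periodic ratio to a constant. Concretely, setting $F(x):=P(e^{ix})$ and $G(x):=Q(e^{ix})$ (and assuming, as we may, that $P,Q\not\equiv 0$), one obtains entire trigonometric polynomials of exponential type $\leq N$, hence of finite order. The hypotheses translate to $|F(x)|=|G(x)|$ and $|F(x+b)-F(x)|=|G(x+b)-G(x)|$ for every $x\in\R$, so the main result of \cite{Mc}, applied exactly as in the proof of Proposition \ref{thm:uniqueness2}, provides a continuous, meromorphic function $W$ on $\R$ with $W(x+b)=W(x)$, $|W(x)|=1$, and $G=WF$.

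The second step transfers this periodicity into algebraic rigidity for the rational function $R(z):=Q(z)/P(z)$, whose numerator and denominator have degree at most $N$. Since $R(e^{ix})=W(x)$ for generic $x\in\R$ and $W$ is $b$-periodic, letting $\zeta:=e^{ib}$ yields $R(\zeta z)=R(z)$ as rational functions. Writing $R=A/B$ in lowest terms with $\gcd(A,B)=1$, the identity $A(\zeta z)B(z)=A(z)B(\zeta z)$ and coprimality force $A(\zeta z)=\lambda A(z)$ and $B(\zeta z)=\lambda B(z)$ for a common constant $\lambda$. Expanding $A(z)=\sum_j a_j z^j$ gives $a_j(\zeta^j-\lambda)=0$, so any two indices $j_1\neq j_2$ with nonzero coefficients would have to satisfy $\zeta^{j_1-j_2}=1$.

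The hypothesis $b<2\pi/N$ plays its decisive role precisely here: either $\zeta$ is not a root of unity (when $b/2\pi\notin\Q$), ruling out any such coincidence; or $\zeta$ is a primitive $q$-th root of unity with $b=2\pi p/q$ in lowest terms, and $b<2\pi/N$ forces $q>Np\geq N$, so no two indices in $[0,N]$ can differ by a nonzero multiple of $q$. In either case $A$, and similarly $B$, is a monomial, and coprimality forces one of them to be constant; thus $R(z)=c\,z^k$ with $|k|\leq N$. A final application of $R(\zeta z)=R(z)$ gives $\zeta^k=1$, which combined with $|k|\leq N$ forces $k=0$ by the same dichotomy, so $R\equiv c$ with $|c|=1$ (from $|W|=1$), i.e.\ $Q=cP$. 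The main obstacle I anticipate is the careful invocation of McDonald's theorem when $P$ or $Q$ has zeros on the unit circle, where $F,G$ vanish on $\R$; but the equality $|F|=|G|$ forces these zero sets and their multiplicities to match, so $W=G/F$ extends continuously across them and McDonald's conclusion applies verbatim.
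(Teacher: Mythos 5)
Your proof is correct, and while the first step (invoking McDonald's magnitude--retrieval theorem to produce the continuous, unimodular, $b$-periodic meromorphic factor $W$ with $Q(e^{ix})=W(x)P(e^{ix})$) coincides with the paper's, your second step is genuinely different. The paper first factors $P$ and $Q$ and uses $|P|=|Q|$ on the circle to show that each zero of $Q$ is either a zero of $P$ or its reflection $1/\bar{x}_j$ across the circle; it then writes $W$ explicitly as a Blaschke-type quotient over the reflected zeros and derives a contradiction from the $b$-periodicity by matching the zero sets of two polynomial products, using $b<2\pi/N$ to show an orbit $\{x_Je^{-ikb}\}$ would be too large. You bypass all of that bookkeeping: observing that $W(x)=R(e^{ix})$ for the rational function $R=Q/P$, the periodicity becomes the clean functional equation $R(\zeta z)=R(z)$ with $\zeta=e^{ib}$, and writing $R=A/B$ in lowest terms, coprimality forces $A(\zeta z)=\lambda A(z)$, $B(\zeta z)=\lambda B(z)$, whence the coefficient relations $a_j(\zeta^j-\lambda)=0$ reduce everything to the single arithmetic fact that $\zeta^m\neq 1$ for $0<|m|\leq N$ --- exactly where $b<2\pi/N$ enters in both arguments. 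Your route is shorter, avoids the reflected-zero case analysis entirely (including the reordering conventions the paper needs to set up $P_1,P_2,P_2^*$), and makes the role of the sampling condition more transparent; the paper's route has the side benefit of exhibiting the structure of the a priori ambiguities (zero flipping) before eliminating them. Your closing remark about zeros of $P$ on the unit circle is also handled correctly: $|F|=|G|$ on $\R$ matches those zeros with multiplicity, so $W$ extends continuously, which is in any case part of the conclusion of McDonald's theorem as the paper uses it.
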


\begin{proof}
The proof is divided into two steps. The first one is folklore
and the second part is an elaboration on a result by McDonalds \cite{Mc}.

Write $P(x)=\dst \alpha x^k\prod_{j=1}^K(x-x_j)$ and $Q(x)=\dst \beta x^l\prod_{j=1}^L(x-y_k)$
with $x_k,y_k\not=0$ and, without loss of generality $K\geq L$. Then
\begin{multline*}
|P(e^{2i\pi x})|^2=|\alpha|^2P(e^{2i\pi x})\overline{P(e^{2i\pi x})}
=|\alpha|^2\prod_{j=1}^K(e^{2i\pi x}-x_j)(e^{-2i\pi x}-\overline{x_j})\\
=|\alpha|^2e^{-2i\pi Kx}\prod_{j=1}^K(e^{2i\pi x}-x_j)(1-\overline{x_j}e^{2i\pi x})
\end{multline*}
while
$$
|Q(e^{2i\pi x})|^2=|\beta|^2e^{-2i\pi Lx}\prod_{j=1}^L(e^{2i\pi x}-y_j)(1-\overline{y_j}e^{2i\pi x}).
$$
It follows that $|P|=|Q|$ on the unit circle implies that if $\zeta=e^{2i\pi x}$
$$
|\alpha|^2\prod_{j=1}^K(\zeta-x_j)(1-\overline{x_j}\zeta)=|\beta|^2\zeta^{K-L}
\prod_{j=1}^L(e^{2i\pi x}-\zeta)(1-\overline{y_j}\zeta).
$$
This is an identity between two polynomials. As it is valid on the unit circle, it is valid over $\C$.
As a consequence, as the left hand side does not vanish at zero, so does the right hand side and $K=L$.
Further, the two polynomials have same zeros. The zeros of the left hand side (counted with multiplicity)
are $\{x_j,1/\bar x_j,j=1,\ldots,K\}$ and those of the right hand side are $\{y_j,1/\bar y_j,j=1,\ldots,K\}$
thus for every $j$, $y_j=x_j$ or $y_j=1/\bar x_j$, the reflection of $x_j$ with respect to the unit circle.
In particular, note that if $|x_j|=1$
then it is a common zero of $P$ and $Q$ and $1/\bar x_j=x_j$.

It follows that, up to reordering the zeroes, we may first list the zeros that are not reflected and
then those that are reflected:
$$
Q(x)=\beta z^l\prod_{j=1}^J(x-x_j)\prod_{j=J+1}^K(x-1/\overline{x_j}).
$$
In order to remove some ambiguities, note that one may have a pair of zeros of the form $\{x,1/\bar x\}$,
{\it i.e.} there are $j,k$ such that $x_j=x$ and $x_k=1/\bar x$.
Up to reordering the zeroes, we may assume that those $j,k$'s are $\leq J$. 
We can thus write $P(z)=\alpha z^kP_1(z)P_2(z)$ and $Q(z)=\beta z^lP_1(z)P_2^*(z)$ with 
$P_2(z)=\prod_{j=J+1}^K(x-x_j)$ and 
$P_2^*(z)=\prod_{j=J+1}^K(x-1/\overline{x_j})$. Moreover, assume that if $j,k\geq J+1$
then $x_j\not=1/\overline{x_k}$ since the corresponding terms can be put into $P_1$.

Our aim is to show that this factor $P_2$ is not present here. From now one we argue towards a contradiction by
assuming that there is at least one reflected zero, so that $P_2$ has at least one zero $x_J$.
Further, up to re-ordering the zeroes, we may assume that $|x_j|$ is non-decreasing for $j\geq J$.

From McDonald \cite{Mc} we know that
$Q(e^{ix})=W(x)P(e^{ix})$ with $W$ meromorphic, periodic of period $b$ with $|W(x)|=1$ for $x$ real,
continuous on the real line. The previous argument shows that
$$
W(x)=\frac{\beta}{\alpha}e^{i(l-k)x}\frac{\prod_{j=J+1}^K(e^{ix}-1/\overline{x_j})}{\prod_{j=J+1}^K(e^{ix}-x_j)}
$$
so that
$$
W(x+b)=\frac{\beta}{\alpha}e^{i(l-k)(x+b)}\frac{\prod_{j=J+1}^K(e^{ix}-e^{-ib}/\overline{x_j})}{\prod_{j=J+1}^K(e^{ix}-x_je^{-ib})}.
$$
But then $W(x)=W(x+b)$ implies that
$$
\prod_{j=J+1}^K(e^{ix}-1/\overline{x_j})\prod_{j=J+1}^K(e^{ix}-x_je^{-ib})
=e^{i(l-k)b}\prod_{j=J+1}^K(e^{ix}-e^{-ib}/\overline{x_j})\prod_{j=J+1}^K(e^{ix}-x_j)
$$
for every $x\in \R$ so that we have the identity between polynomials
$$
\prod_{j=J+1}^K(X-1/\overline{x_j})\prod_{j=J+1}^K(X-x_je^{-ib})
=e^{i(l-k)b}\prod_{j=J+1}^K(X-e^{-ib}/\overline{x_j})\prod_{j=J+1}^K(X-x_j)
$$
Therefore the sets of zeros $\{x_je^{-ib},j=J+1,\ldots,K\}\cup \{1/\overline{x_j},j=J+1,\ldots,K\}$
and $\{x_j,j=J+1,\ldots,K\}\cup \{e^{-ib}/\overline{x_j},j=J+1,\ldots,K\}$ are
equal (counting multiplicity).

Let $L\leq K\leq N$ be such that $|x_{J+j}|=|x_{J+1}|$ for $j=1,\ldots, L$.
If we had $\{x_{j+J}e^{-ib},j=1,\ldots,L\}=\{x_{j+J},j=1,\ldots,L\}$ with multiplicity then this set
would be invariant under multiplication by $e^{-ib}$. In particular, it contains $\{x_Je^{-ikb},k\in\Z\}$
but we have chosen $b<2\pi/N\leq 2\pi/L$ so $P_2$ would have more than $L$ zeros, a contradiction. Thus there is a $j,k$ such that $x_je^{-ib}=e^{-ib}/\overline{x_k}$.
that is $x_j=1/\overline{x_k}$, again a contradiction.

We are then left with $W(x)=\dfrac{\beta}{\alpha}e^{i(k-l)x}$ which is $b$-periodic. As $b<2\pi/N$
it follows that $k=l$. Thus $W$ is a constant of modulus $1$ and $Q=WP$ as claimed.
\end{proof}

Note that the argument also works if $b\in\R\setminus\Q\pi$. 

\begin{corollary}
Let $\ffi,\psi\in\C^N$ and assume that for $k=0,\ldots,2N-2$,
$$
\left\{\begin{matrix}
\abs{P_\ffi\left(\dfrac{k}{2N-1}\right)}=\abs{P_\psi\left(\dfrac{k}{2N-1}\right)}\\[9pt]
\abs{P_\ffi\left(\dfrac{k+1}{2N-1}\right)-P_\ffi\left(\dfrac{k}{2N-1}\right)}
=\abs{P_\psi\left(\dfrac{k+1}{2N-1}\right)-P_\psi\left(\dfrac{k}{2N-1}\right)}
\end{matrix}\right.
$$
then $\ffi,\psi$ are equivalent up to a phase factor.
\end{corollary}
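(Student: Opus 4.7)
The plan is to reduce the statement directly to Proposition \ref{prop:poldfiff} by means of an aliasing / sampling argument that upgrades equalities at $2N-1$ sample points to equalities on all of $\R$.

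Set $P(z)=\sum_{j=0}^{N-1}\ffi_j z^j$ and $Q(z)=\sum_{j=0}^{N-1}\psi_j z^j$; these are polynomials of degree at most $N-1\le N$, and $P(e^{2i\pi x})=P_\ffi(x)$, $Q(e^{2i\pi x})=P_\psi(x)$. Put $b:=2\pi/(2N-1)$ and notice that, since $N\ge 2$, we have $b<2\pi/N$, so the hypothesis on the step size in Proposition \ref{prop:poldfiff} is satisfied. (The case $N=1$ is trivial, since then $P_\ffi$ is constant and the first sampled equality already gives $|\ffi_0|=|\psi_0|$.)

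First I would promote the two sampled identities to identities valid for every $x\in\R$. The key observation is that
\[
x\mapsto |P(e^{ix})|^2=\sum_{j,k=0}^{N-1}\ffi_j\overline{\ffi_k}\,e^{i(j-k)x}
\]
is a trigonometric polynomial of degree at most $N-1$, and hence is completely determined by its values at any $2N-1$ equispaced nodes, in particular at $x_k=2\pi k/(2N-1)$, $k=0,\dots,2N-2$. The same holds for $|Q(e^{ix})|^2$. Since these two trigonometric polynomials coincide at the $2N-1$ sample points by the first set of hypotheses, they coincide on all of $\R$. Similarly, writing
\[
P(e^{i(x+b)})-P(e^{ix})=\sum_{j=0}^{N-1}\ffi_j(e^{ijb}-1)e^{ijx},
\]
the map $x\mapsto|P(e^{i(x+b)})-P(e^{ix})|^2$ is again a trigonometric polynomial of degree at most $N-1$, and the same sampling argument, applied to the second set of hypotheses, yields $|P(e^{i(x+b)})-P(e^{ix})|=|Q(e^{i(x+b)})-Q(e^{ix})|$ for all real $x$.

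At this point the hypotheses of Proposition \ref{prop:poldfiff} are satisfied for $P$, $Q$ and $b$, so there exists $c\in\T$ such that $P=cQ$ as polynomials, which forces $\ffi_j=c\psi_j$ for every $j$ and therefore $\ffi=c\psi$. The only potentially delicate step is the sampling argument of the second quantity: one must check that the highest-frequency coefficient in $P(e^{i(x+b)})-P(e^{ix})$ is not killed by an accident of $b$ (in our case it is not, since $e^{i(N-1)b}\neq 1$ as $(N-1)b=2\pi(N-1)/(2N-1)\notin 2\pi\Z$), so the expression is a genuine trigonometric polynomial of degree $\le N-1$ whose $2N-1$ equispaced samples determine it uniquely. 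Everything else is bookkeeping.
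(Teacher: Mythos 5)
Your argument is correct and is essentially the paper's own proof: both upgrade the $2N-1$ equispaced samples to identities on all of $\R$ by viewing $|P(e^{ix})|^2$ and $|P(e^{i(x+b)})-P(e^{ix})|^2$ as trigonometric polynomials with frequencies in $\{-(N-1),\dots,N-1\}$, and then invoke Proposition \ref{prop:poldfiff} with $b=2\pi/(2N-1)<2\pi/N$. The only superfluous worry is your final caveat about the top coefficient of $P(e^{i(x+b)})-P(e^{ix})$ possibly vanishing: the sampling argument only needs the degree to be \emph{at most} $N-1$, so nothing would break even if that coefficient were zero.
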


\begin{proof}
As in the previous section, 
$\dst\abs{P_\ffi\left(\dfrac{k}{2N-1}\right)}, k=0,\ldots,2N-2$ fully defines $|P_\ffi|$ while 
$$\dst\abs{P_\ffi\left(\dfrac{k+1}{2N-1}\right)-P_\ffi\left(\dfrac{k}{2N-1}\right)}, k=0,\ldots,2N-2$$
fully determines $|P_\ffi(x+(2N-1)^{-1})-P_\ffi(x)|$.
Applying Proposition \eqref{prop:poldfiff} implies that there is $\lambda\in\C$ with $|\lambda|=1$
such that $P_\psi=\lambda P_\ffi$ which gives the result.
\end{proof}

We will now show that the result is false if we sample at a rate $1/(2N-2)$ instead of $1/(2N-1)$.

\begin{lemma}\label{lem:pols2}
 Let $p(z)=z$ and $q(z)=\frac{z^2+i}{\sqrt{2}}$. Moreover, let $\Lambda = \{1,i,-1,-i\}$. Then it holds that $|p(\lambda)|=|q(\lambda)|$ and $|p(\lambda)-p(i\lambda)|= |q(\lambda)-q(i\lambda)|$ for all $\lambda\in \Lambda$.
\end{lemma}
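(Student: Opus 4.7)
The plan is to verify the two claimed identities by direct computation, noting that both $p$ and $q$ map the unit circle to itself and that multiplying the argument by $i$ has a very clean effect on both polynomials.

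First I would check $|p(\lambda)|=|q(\lambda)|=1$ for every $\lambda\in\Lambda$. The identity $|p(\lambda)|=|\lambda|=1$ is immediate. For $q$, observe that $\lambda^2\in\{1,-1\}$ whenever $\lambda\in\Lambda$, so $\lambda^2+i\in\{1+i,-1+i\}$, both of which have modulus $\sqrt{2}$; dividing by $\sqrt{2}$ gives modulus $1$.

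Next I would handle the derivative-like condition. The key observation is that for the polynomial $p(z)=z$ we have $p(\lambda)-p(i\lambda)=\lambda(1-i)$, so $|p(\lambda)-p(i\lambda)|=|1-i|=\sqrt{2}$, independent of $\lambda$. For $q$, the quadratic term makes things equally clean: since $(i\lambda)^2=-\lambda^2$, the additive constant $i/\sqrt{2}$ cancels and one obtains
\[
q(\lambda)-q(i\lambda)=\frac{\lambda^2-(i\lambda)^2}{\sqrt{2}}=\frac{2\lambda^2}{\sqrt{2}}=\sqrt{2}\,\lambda^2,
\]
whose modulus is also $\sqrt{2}$ for every $\lambda\in\Lambda$.

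There is no real obstacle here; the only thing to notice is why these particular coefficients were chosen, namely that $q$ is built so that (i) its values on $\Lambda$ all have modulus $1$, matching $p$, and (ii) the $i$-shifted difference kills the constant term and leaves a pure square-modulus contribution. Putting the two displays together yields both asserted equalities for every $\lambda\in\Lambda$, which completes the verification.
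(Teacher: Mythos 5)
Your computations are correct and this is exactly the intended argument: the paper's proof simply states that the lemma "is easily checked by direct computation," and you have carried out that computation in full, with the nice observation that the constant term $i/\sqrt{2}$ cancels in the difference $q(\lambda)-q(i\lambda)$.
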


\begin{proof}
 This is easily checked by direct computation.
\end{proof}

\begin{proposition}
Let $N=2m+1$ be an odd integer $\ge 3$.
There exist signals $\ffi,\psi\in \mathbb{C}^ N$ which are not equivalent up to a phase factor
such that 
 for $k=0,\ldots,2N-3$,
 \begin{equation}
 \label{eq:last}
 \left\{\begin{matrix} \abs{P_\ffi\left(\frac{k}{2N-2}\right)} =  
 \abs{P_\psi\left(\frac{k}{2N-2}\right)}\\[9pt]
 \abs{P_\ffi\left(\frac{k}{2N-2}\right)- P_\ffi\left(\frac{k-1}{2N-2}\right)} 
=\abs{P_\psi\left(\frac{k}{2N-2}\right)- P_\psi\left(\frac{k-1}{2N-2}\right)} 
  \end{matrix}\right..
 \end{equation}
\end{proposition}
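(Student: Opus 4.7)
The plan is to follow the same template as Proposition \ref{thm:counterexpls1}, but with the polynomials from Lemma \ref{lem:pols2} in place of those from Lemma \ref{lem:pols}. First, I would define $\ffi\in\C^N$ to be the sequence of coefficients of $\tilde p(z):=p(z^m)=z^m$, and $\psi\in\C^N$ to be the sequence of coefficients of
$$
\tilde q(z):=q(z^m)=\frac{z^{2m}+i}{\sqrt{2}}.
$$
Since $2m=N-1$, both $\tilde p$ and $\tilde q$ are polynomials of degree at most $N-1$, so $\ffi,\psi\in\C^N$ is legitimate. In this notation $P_\ffi(x)=p(e^{2i\pi m x})$ and $P_\psi(x)=q(e^{2i\pi m x})$.

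Second, I would verify the two required sampled identities. Note $2N-2=4m$, so at the sample points we have $e^{2i\pi m\cdot k/(4m)}=e^{ik\pi/2}=:\lambda_k$, and $\lambda_k\in\Lambda=\{1,i,-1,-i\}$ for every integer $k$. The modulus identity is then immediate from Lemma \ref{lem:pols2}, since
$$
\abs{P_\ffi\!\left(\tfrac{k}{2N-2}\right)}=\abs{p(\lambda_k)}=\abs{q(\lambda_k)}=\abs{P_\psi\!\left(\tfrac{k}{2N-2}\right)}.
$$
For the discrete-derivative identity, observe that $\lambda_k=i\,\lambda_{k-1}$, hence
$$
P_\ffi\!\left(\tfrac{k}{2N-2}\right)-P_\ffi\!\left(\tfrac{k-1}{2N-2}\right)=p(i\lambda_{k-1})-p(\lambda_{k-1}),
$$
and analogously for $\psi$ with $q$ in place of $p$. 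Applying the second clause of Lemma \ref{lem:pols2} to $\lambda=\lambda_{k-1}\in\Lambda$ (which is valid even for $k=0$, since $\lambda_{-1}=-i\in\Lambda$) yields the desired equality \eqref{eq:last}.

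Finally, I would argue non-equivalence. The vector $\ffi$ has a single non-zero entry, at index $m$, while $\psi$ has exactly two non-zero entries, at indices $0$ and $2m$; so there is no $c\in\C$ with $\psi=c\ffi$.

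Overall this proof is essentially bookkeeping once Lemma \ref{lem:pols2} is in hand, so there is no real obstacle. The one point that deserves care—exactly as in Proposition \ref{thm:counterexpls1}—is to match the parameter $m$ in the substitution $z\mapsto z^m$ with the sampling rate $2N-2=4m$, so that the samples $e^{2i\pi m k/(2N-2)}$ land precisely on the fourth roots of unity $\Lambda$, enabling Lemma \ref{lem:pols2} to be applied at every $k$.
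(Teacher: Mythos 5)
Your proposal is correct and follows essentially the same route as the paper: substitute $z\mapsto z^m$ into the polynomials of Lemma \ref{lem:pols2} so that the sample points $e^{2i\pi mk/(2N-2)}=e^{ik\pi/2}$ land on the fourth roots of unity, then invoke the lemma at each sample and conclude non-equivalence from the differing numbers of non-zero coefficients. Your write-up is in fact slightly more careful than the paper's (explicit degree check $2m=N-1$ and the remark that $\lambda_{-1}=-i\in\Lambda$ handles $k=0$), but there is no substantive difference.
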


\begin{proof}
 We use a similar construction as in the proof of Proposition \ref{thm:counterexpls1} and define $\ffi,\psi$ to be the sequence of coefficients of $\tilde{p}(z):=p(z^m)$ and 
 $\tilde{q}(z):=q(z^m)$, respecitvely, where $p,q$ are the polynomials from Lemma \ref{lem:pols2}.
 Again, since the number of non-zero coefficients does not agree we find that $\ffi$ and $\psi$ are not equivalent.

To see that they satisfy \eqref{eq:last} note that from Lemma \ref{lem:pols2} we deduce that
\begin{eqnarray*}
\abs{P_\ffi\left(\frac{k}{2N-2}\right)}&=&\abs{p(e^{2i\pi m\frac{k}{4m}})}
=\abs{p(e^{i k\pi/2})}\\
&=&\abs{q(e^{i k\pi/2})} =\abs{P_\psi\left(\frac{k}{2N-2}\right)}
\end{eqnarray*}
while
\begin{multline*}
\abs{P_\ffi\left(\frac{k+1}{2N-2}\right)-P_\ffi\left(\frac{k}{2N-2}\right)}
=\abs{p(ie^{i k\pi/2})-p(e^{ik\pi/2})}\\
=\abs{q(ie^{i k\pi/2})-q(e^{ik\pi/2})}
=\abs{P_\psi\left(\frac{k+1}{2N-2}\right)-P_\psi\left(\frac{k}{2N-2}\right)}
\end{multline*}
which implies the claim.
\end{proof}

\section*{Appendix}
The purpose of this section is to provide some background on 
Wright's conjecture, as formulated in Conjecture \ref{conj:wright}.\\
We begin with introducing the main objects and notions appearing in quantum mechanics that we need here.
The space of all possible states of a quantum mechanical system is represented by $L^2(\mathbb{R})$. A state $\psi\in L^2(\mathbb{R})$ is also called a \emph{wave function}.
Two wave functions $\psi$ and $\ffi$ are considered equivalent if they agree up to multiplication by a unimodular constant.\\
Quantities of a system that can be measured are called \emph{observables} and represented by densely-defined self-adjoint operators on $L^2(\mathbb{R})$.
The expected value of the state $\psi\in D(A)$ in the observable $A$ is
defined as
$$
E_\psi(A)=\scal{\psi,A\psi}.
$$

The two following examples are essential in this paper. Let $u,v\in L^\infty(\R)$
be real valued. To $u$ and $v$ associate the following two observables\footnote{Note that we normalized the Fourier
transform $\ff$ so that it is unitary. Its adjoint is thus the inverse Fourier transform $\ff^*\ffi(x)=\ff^{-1}\ffi(x)=\ff\ffi(-x)$.}
$$
M_u\ffi=u\ffi\qquad\mbox{and}\quad \mm_v \ffi =\ff^*\bigl[v\ff[\ffi]\bigr].
$$
Then
$$
E_\psi(M_u)=\int_\R u(x)|\psi(x)|^2\,\mbox{d}x
$$
and 
$$
E_\psi(\mm_v)=\int_\R v(\xi)|\widehat{\psi}(\xi)|^2\,\mbox{d}\xi.
$$ 
Here, we keep the convention of notation in mathematics where the position variable is denoted by $x$
and the momentum variable is denoted by $\xi$ instead of $p$.

Let $\bb$ be the set of Borel subsets of $\R$.
It is then obvious that $|\psi(x)|$ is uniquely determined by
$$
\mathcal{E}_Q=\big\{E_\psi(M_{\mathbf{1}_B})\big\}_{B\in\bb}
$$
which is called the distribution of the state $\psi$ with respect to
position since
$$
\mathcal{E}_Q=\big\{\|\mathbf{1}_B(Q)\psi\|\big\}_{B\in\bb}
$$
where $\mathbf{1}_B(Q)$ are the spectral projections associated to the position operator.

On the other hand
$|\widehat{\psi}(\xi)|$ is uniquely determined by
distribution of the state $\psi$ with respect to momentum:
$$
\mathcal{E}_P:=\{E_\psi(\mm_v)\,:\ v=\mathbf{1}_B,\ B\mbox{ a Borel set}\}
=\{\|\mathbf{1}_B(P)\psi \|,\ B\mbox{ a Borel set}\}:=\mathcal{S}_P
$$
where $\mathbf{1}_B(P)$ are the spectral projections associated to the momentum operator.

In a footnote to the {\em Handbuch der Physik} article on the general principle
of wave mechanics \cite{Pa}, W. Pauli asked whether a wave function $\psi$ is uniquely determined (up to a constant phase factor) by one of the equivalent quantities

\begin{itemize}
\item the Pauli data $(|\psi|,|\widehat{\psi}|)$;

\smallskip

\item $\big\{\|\mathbf{1}_B(Q)\psi\|\big\}_{B\in\bb}$, $\big\{\|\mathbf{1}_B(P)\psi\|\big\}_{B\in\bb}$;

\smallskip

\item $\big\{E_\psi(M_{\mathbf{1}_B})\big\}_{B\in\bb}$, $\big\{E_\psi(\mm_{\mathbf{1}_B})\big\}_{B\in\bb}$.
\end{itemize}
The question can also be found {\it e.g.} in the book by H. Reichenbach \cite{Re} and in 
Busch \& Lahti \cite{BL}.

\smallskip

As mentionned in the introduction, it is known that in general the Pauli data does not uniquely determine
the state $\psi$ (up to a constant phase factor).

It is then natural to ask whether there exists a set of observables $(A_j)_{j\in J}$
(preferably including position and momentum or at least having a physical meaning) 
such that the associated sets built from spectral projections
$$
\mathcal{E}_j:=\big\{\|\mathbf{1}_B(A_j)\psi\|\big\}_{B\in\bb}
=\big\{E_\psi\bigl(\mathbf{1}_B(A_j)\bigr)\big\}_{B\in\bb}
$$
uniquely determine every state $\psi$.

Using the spectral theorem, to a self-adjoint operator $A_j$ we can associate
a unitary operator $U_j$ and a multiplication operator $M_j$ on a space $L^2(\mu_j)$
such that $A_j=U_j^*M_jU_j$. Then the data $\mathcal{E}_j$, $j\in J$
uniquely determine $|U_j\psi|$, $j\in J$. This then directly leads
to Wright's Conjecture \ref{conj:wright} and to its relaxation
\ref{conj:3unitaries}: {\sl find a set of measures $\mu_j$ and {\em unitary}
operators $U_j\,:L^2(\R^d)\to L^2(\mu_j)$ such that $|U_j\psi|=|U_j\ffi|$, $j\in J$,
implies that $\psi$ and $\ffi$ are equivalent up to a constant phase factor.}

A relaxed version is to find a set $\{T_j\}_{j\in J}$ of bounded self-adjoint (or even only
bounded) operators on $L^2(\R)$ such that $|T_j\psi|=|T_j\ffi|$, $j\in J$,
implies that $\psi$ and $\ffi$ are equivalent up to a constant phase factor.
The data $|T_j\psi|$ can also be interpreted as an expectation of the state
$\psi$ with respect to a family of observables. To be more precise,
to a bounded operator $T$, we may associate the self-adjoint operator
$A_u=T^*M_uT$ whith $u\in L^\infty(\R)$ real valued. Then
$$
\scal{\psi,A_u\psi}=\int_\R u(x)|T\psi(x)|^2\,\mbox{d}x
$$
so that $|T\psi|$ is uniquely determined by
$$
\ee_T:=\big\{E_\psi(T^*M_{\mathbf{1}_B}T)\big\}_{B\in\bb}.
$$
However, it does not seem possible to reformulate this family of measurements in terms of
spectral projections associated to a single self-adjoint operator. 

\section{Data availability}
No data has been generated or analysed during this study.

\section{Funding and/or Conflicts of interests/Competing interests}

The second author was supported by an Erwin-Schrödinger Fellowship (J-4523) of the Austrian Science Fund FWF.

The authors have no relevant financial or non-financial interests to disclose.

\bibliographystyle{plain}

\end{document}